\newcommand{\A}{\mathcal{A}}
\newcommand{\re}{\mathbb{R}}
\newcommand{\cpx}{\mathbb{C}}
\newcommand{\N}{\mathbb{N}}
\newcommand{\lmd}{\lambda}
\newcommand{\eps}{\epsilon}
\newcommand{\dt}{\delta}
\newcommand{\Dt}{\Delta}
\def\af{\alpha}
\def\bt{\beta}
\def\rank{\mbox{rank}}
\newcommand{\Sig}{\Sigma}
\newcommand{\reff}[1]{(\ref{#1})}
\newcommand{\prm}{\prime}
\newcommand{\supp}[1]{\mbox{supp}(#1)}
\newcommand{\bdes}{\begin{description}}
\newcommand{\edes}{\end{description}}
\newcommand{\bal}{\begin{align}}
\newcommand{\eal}{\end{align}}
\newcommand{\bnum}{\begin{enumerate}}
\newcommand{\enum}{\end{enumerate}}
\newcommand{\bit}{\begin{itemize}}
\newcommand{\eit}{\end{itemize}}
\newcommand{\bea}{\begin{eqnarray}}
\newcommand{\eea}{\end{eqnarray}}
\newcommand{\be}{\begin{equation}}
\newcommand{\ee}{\end{equation}}
\newcommand{\baray}{\begin{array}}
\newcommand{\earay}{\end{array}}
\newcommand{\bsry}{\begin{subarray}}
\newcommand{\esry}{\end{subarray}}
\newcommand{\bca}{\begin{cases}}
\newcommand{\eca}{\end{cases}}
\newcommand{\bcen}{\begin{center}}
\newcommand{\ecen}{\end{center}}
\newcommand{\bbm}{\begin{bmatrix}}
\newcommand{\ebm}{\end{bmatrix}}
\newcommand{\bmx}{\begin{matrix}}
\newcommand{\emx}{\end{matrix}}
\newcommand{\bpm}{\begin{pmatrix}}
\newcommand{\epm}{\end{pmatrix}}
\newcommand{\btab}{\begin{tabular}}
\newcommand{\etab}{\end{tabular}}
\newtheorem{theorem}{Theorem}[section]
\newtheorem{prop}[theorem]{Proposition}
\newtheorem{lem}[theorem]{Lemma}
\newtheorem{ass}[theorem]{Assumption}
\theoremstyle{definition}
\newtheorem{exm}[theorem]{Example}
\newtheorem{alg}[theorem]{Algorithm}
\newtheorem{remark}[theorem]{Remark}
\begin{document}

%
%
%\title{Linear Optimization with Cones of Moments and Nonnegative Polynomials}
%
%\titlerunning{Linear Optimization with Moments and Nonnegative Polynomials}
%
%
%\author{Jiawang Nie}
%\authorrunning{Jiawang Nie}
%
%
%\institute{Jiawang Nie \at
%Department of Mathematics,
%University of California San Diego, 9500 Gilman Drive,
%La Jolla, CA 92093, USA. \,
%The research was partially supported by the NSF grant DMS-0844775. \\
%\email{njw@math.ucsd.edu}
%}
%
%

\title[Linear Optimization with Moments and Nonnegative Polynomials]
{Linear Optimization with Cones of
Moments and Nonnegative Polynomials}

\author[Jiawang Nie]{Jiawang Nie}
\address{Department of Mathematics\\
  University of California \\
  San Diego}
\email{njw@math.ucsd.edu}
\thanks{Research was partially supported by the 
NSF grants DMS-0844775 and DMS- 1417985.}

\begin{abstract}
Let $\A$ be a finite subset of $\N^n$ and $\re[x]_{\A}$ be the
space spanned by monomials $x^\af$ with $\af \in \A$.
Let $K$ be a compact semialgebraic set of $\re^n$ such that
a polynomial in $\re[x]_{\A}$ is positive on $K$.
Denote by $\mathscr{P}_{\A}(K)$ the cone of polynomials in $\re[x]_{\A}$
that are nonnegative on $K$. The dual cone of
$\mathscr{P}_{\A}(K)$ is $\mathscr{R}_{\A}(K)$,
the set of all moment sequences in $\re^{\A}$
that admit representing measures supported in $K$.
First, we study geometric properties of
$\mathscr{P}_{\A}(K)$ and $\mathscr{R}_{\A}(K)$
(like interiors, closeness, duality, memberships),
and construct a convergent hierarchy of semidefinite relaxations for each of them.
Second, we propose a semidefinite algorithm for solving
linear optimization problems with the cones
$\mathscr{P}_{\A}(K)$ and $\mathscr{R}_{\A}(K)$,
and prove its asymptotic and finite convergence.
Third, we show how to check whether $\mathscr{P}_{\A}(K)$ and $\mathscr{R}_{\A}(K)$
intersect affine subspaces;
if they do, we show how to get a point in the intersections;
if they do not, we prove certificates for the non-intersecting.
\end{abstract}

\keywords{Moment, nonnegative polynomial, representing measure,
semidefinite program, sum of squares, truncated moment sequence}

\subjclass{65K05, 90C22, 90C26}

\maketitle

\section{Introduction}

Let $\N$ (resp., $\re$) be the set of nonnegative integers
(resp., real numbers), and
let $\re[x]:=\re[x_1,\ldots,x_n]$ be the ring of real polynomials in
$x:=(x_1,\ldots,x_n)$.
For $\af :=(\af_1, \ldots,\af_n) \in \N^n$,
denote $x^\af := x_1^{\af_1}\cdots x_n^{\af_n}$ and $|\af|:=\af_1+\cdots+\af_n$.
Let $\re[x]_d$ be the set of polynomials in $\re[x]$ with degrees $ \leq d$,
and $K \subseteq \re^n$ be a set. Denote
\[
\mathscr{P}_d(K) =\{ p \in \re[x]_d: \, p(u) \geq 0 \,\,\forall \, u \in K \},
\]
the cone of polynomials in $\re[x]_d$ that are nonnegative on $K$.
Let
\[ \N_d^n=\{\af \in \N^n: |\af| \leq d\}. \]
The dual space of $\re[x]_d$ is $\re^{\N_d^n}$,
the space of {\it truncated moment sequences} (tms') of degree $d$.
A tms $y = (y_\af) \in \re^{\N_d^n}$
defines a linear functional acting on $\re[x]_d$ as
{\small
\[
\langle p, y \rangle := \sum_{|\af|\leq d}
p_\af y_\af \quad \mbox{ for all } \quad
p = \sum_{|\af|\leq d} p_\af x^\af.
\]
\noindent}It is said to admit a $K$-measure $\mu$
(i.e., $\mu$ is a Borel measure supported in $K$) if
$y_\af = \int x^\af \mathtt{d}\mu$ for all $\af \in \N_d^n$.
Such $\mu$ is called a {\it $K$-representing} measure for $y$.
In applications, we are often interested in {\it finitely atomic} measures,
i.e., their supports are finite sets.
Denote by $\dt_u$ the Dirac measure supported at $u$.
A measure $\mu$ is called {\it $r$-atomic} if
$\mu = \lmd_1 \dt_{u_1} + \cdots + \lmd_r \dt_{u_r}$
with each $\lmd_i >0$ and $u_i \in \re^n$.
Let $meas(y,K)$ be the set of all $K$-measures admitted by $y$.
Denote
\[
\mathscr{R}_d(K) = \{ y \in \re^{\N_d^n} : \, meas(y,K) \ne \emptyset \}.
\]
When $K$ is compact, $\mathscr{R}_d(K)$ is the dual cone of $\mathscr{P}_d(K)$
(cf. Tchakaloff \cite{Tch} and Laurent \cite[Section~5.2]{Lau}).

Linear optimization problems with cones $\mathscr{P}_d(K)$ and $\mathscr{R}_d(K)$
have wide applications. For instance, the minimum value
of a polynomial $f \in \re[x]_d$ on $K$ can be found by maximizing $\gamma$
subject to $f-\gamma \in \mathscr{P}_d(K)$; the corresponding dual problem is
minimizing a linear function over the cone $\mathscr{R}_d(K)$
(cf. Lasserre \cite{Las01}). Generalized problems of moments (GPMs),
proposed by Lasserre \cite{Las08},
are optimizing linear moment functionals
over the set of measures supported in a given set
and satisfying some linear constraints.
GPMs are equivalent to linear optimization problems with the cone $\mathscr{R}_d(K)$.
Lasserre \cite{Las08} proposed semidefinite relaxations for solving GPMs.
We refer to \cite{GLPT12,LasBok,Lau07,Lau,ParStu,Par03}
for moment and polynomial optimization problems.
Semidefinite programs are also very useful in representing convex sets and convex hulls,
like in \cite{GN11,GPT12,HN1,HN2,Las08s,Las09,NPS10,Schd12},
and  in solving polynomial equations,
like in \cite{LLR08,LLR09,LR12}.
%
%
%We refer to Gouveia, Parrilo and Thomas \cite{GPT10}, Gouveia and Netzer \cite{GN11},
%Gouveia and Thomas \cite{GPT12}, Helton and Nie \cite{HN1,HN2,HN3},
%Lasserre \cite{Las08s,Las09}, Scheiderer \cite{Schd12}.
%Semidefinite programs are also useful in solving real polynomial equations
%and optimization problems, like in Lasserre, Laurent and Rostalski \cite{LLR08,LLR09},
%Laurent and Rostalski \cite{LR12}, Laurent \cite{Lau07},
%Gouveia, Laurent, Parrilo and Thomas \cite{GLPT12}.
%

\bigskip \noindent
{\bf Motivations} \,
In some applications and mathematical problems, we do not have all
entries of a truncated moment sequence,
or we only require partial entries of a tms
to satisfy certain properties. For instance, does there exist a tms $y$
that admits a measure supported in the circle $x_1^2+x_2^2=1$ and
satisfy the linear equations
\be  \label{eqn:y224060=0}
y_{22} = y_{40}+y_{04} = y_{60}+y_{06} =1\,?
\ee
This is a moment problem, but it only involves five moments
$y_{22}, y_{40}, y_{04}, y_{60}, y_{06}$.
As we will see in Example~\ref{exm:5.4}, such a tms $y$ does not exist.

The above motivates us to consider more general settings
of moment problems. Sometimes, we need to work
in the space of incomplete truncated moment sequences.
This leads to the $\A$-truncated $K$-moment problem ($\A$-TKMP),
proposed and studied in \cite{Nie-ATKMP}.
Let $\A \subseteq \N_d^n$ be a subset. The space $\re^{\A}$
is the set of all partially truncated moment sequences,
which only have moments indexed by $\af \in \A$.
An element in $\re^{\A}$ is called an $\A$-truncated moment sequence ($\A$-tms).
A basic question in $\A$-TKMP is:
does an $\A$-tms have a $K$-representing measure?
This issue was discussed in \cite{Nie-ATKMP}.
A generalization of this question is the moment completion problem (MCP):
given an $\A$-tms $y$, can we extend it to a tms $z \in  \mathscr{R}_d(K)$
such that it satisfies some properties?

For the above observations, we consider generalizations of the cones
$\mathscr{R}_d(K)$ and $\mathscr{P}_d(K)$.
Let $\A$ be a finite set in $\N^n$, and
$\re[x]_{\A} := \mbox{span}\{x^\af: \, \af \in \A\}$.
The dual space of $\re[x]_{\A}$ is $\re^{\A}$.
Define $\deg(\A) :=\max\{ |\af|: \, \af \in \A\}$.
The $K$-representing measures for an $\A$-tms $y$ and the set $meas(y,K)$
can be defined same as before. Denote
\[
\baray{rcl}
\mathscr{P}_{\A}(K) &=& \{ p \in \re[x]_{\A}:  p(u) \geq 0 \, \forall \, u \in K \}, \\
\mathscr{R}_{\A}(K) &=& \{ y \in \re^{\A}: \, meas(y,K) \ne \emptyset \}.
\earay
\]
Clearly, if $\A=\N_d^n$, then $\mathscr{P}_{\A}(K)=\mathscr{P}_d(K)$
and $\mathscr{R}_{\A}(K) = \mathscr{R}_d(K)$.
An $\A$-tms $y \in  \mathscr{R}_d(K)$
if and only if it admits a $r$-atomic $K$-measure with $r\leq |\A|$
(cf. \cite[Proposition~3.3]{Nie-ATKMP}).

The $\A$-TKMP has applications in solving moment problems
with noncompact sets like $\re^n$. A classical moment problem is checking
the membership in $\mathscr{R}_d(\re^n)$.
This question is harder than other moment problems because $\re^n$ is noncompact.
However, it can be transformed to a
compact moment problem via homogenization (cf.~\cite{FiNi2012}).
Note that a tms in $\re^{\N_d^n}$ can be
thought of as an $\A$-tms in $\re^{\N_d^{n+1}}$
with $\A = \{ \bt \in \N^{n+1}: |\bt| = d\}$.
Then, under some general assumptions,
$y\in \mathscr{R}_d(\re^n)$ if and only if $y$,
as an $\A$-tms in $\re^{\A}$, belongs to $\mathscr{R}_{\A}(\mathbb{S}^n)$ where
$
\mathbb{S}^n = \{ \tilde{x} \in \re^{n+1}: \, \| \tilde{x} \|_2 =1 \}
$
is the $n$-dimensional unit sphere.
We refer to \cite{FiNi2012} for more details.
The latter question is an $\A$-TKMP with the compact set $\mathbb{S}^n$, and
it can be solved by the method in \cite{Nie-ATKMP}.

The $\A$-TKMP has applications in sums of even power (SOEP) of linear forms.
A form (i.e., a homogeneous polynomial) $f \in \re[x]_d$
($d$ is even) is said to be SOEP
if there exist $L_1,\ldots, L_r \in \re[x]_1$ such that
$f = L_1^d+\cdots + L_r^d$ (cf. \cite{Rez92}).
Let $Q_{n,d}$ be the cone of all SOEP forms of degree $d$.
Each $f$ can be written as
{\small \small
\[
f = \sum_{ \substack{ \af = (\af_1,\ldots, \af_n) \in \N^n \\  |\af| =d  }  }
\binom{d}{\af_1,\ldots,\af_n}
\check{f}_\af x^\af.
\]
\noindent}So,
$f$ can be identified as an $\A$-tms $\check{f} \in \re^\A$
with $\A=\{\af \in \N^n: |\af|=d\}$. Indeed, $f \in Q_{n,d}$ if and only if
$\check{f} \in \mathscr{R}_{\A}(\mathbb{S}^{n-1})$
(cf.~\cite{Nie-ATKMP,Rez92}). Its dual cone $\mathscr{P}_{\A}(K)$ is
$P_{n,d}$, the cone of nonnegative forms in $n$ variables and of degree $d$.
So, checking SOEP forms is an $\A$-TKMP
over the compact set $\mathbb{S}^{n-1}$.

Another application of $\A$-TKMP  is in completely positive (CP) matrices.
A symmetric matrix $C \in \re^{n\times n}$ is CP if
$C = u_1 u_1^T + \cdots + u_ru_r^T$
for $u_1,\ldots, u_r \in \re_+^n$ (the nonnegative orthant).
Each symmetric matrix can be thought of an $\A$-tms in
$\re^{\A}$ with $\A=\{\af \in \N^n: |\af|=2\}$.
It can be shown that $C$ is CP if and only if
$C$, as an $\A$-tms, belongs to $\mathscr{R}_{\A}(K)$
with $K=\{x\in\re_+^n:\, x_1+\cdots+x_n=1\}$ (cf.~\cite{Nie-ATKMP}).
This is also an $\A$-TKMP over a compact set.
The dual cone is $\mathscr{P}_{\A}(K)$, the cone
of $n\times n$ copositive matrices.
(A symmetric matrix $B$ is copositive if $x^TBx \geq 0$ for all $x \geq 0$.)
We refer to \cite{BerSM03,Dur10} for copositive and CP matrices.
An important question is the CP-completion problem is:
given a partial symmetric matrix $A$
(i.e., only its partial entries are known),
we want to assign values to its unknown entries so that $A$ is CP.
This problem is recently investigated by Zhou and Fan \cite{ZhouFan}.
They formulated the CP-completion problem
as an $\A$-TKMP.

\bigskip \noindent
{\bf Contributions} \,
Assume $\A \subseteq \N^n$ is finite and $K$ is
a semialgebraic set as
\be  \label{def:K}
K = \left\{x\in \re^n :\, h(x)  = 0,  g(x) \geq 0 \right\},
\ee
defined by two polynomial tuples
$h=(h_1,\ldots, h_{m_1})$ and $g=(g_1,\ldots, g_{m_2})$.
Assume $K$ is compact and $\re[x]_{\A}$ contains a polynomial
that is positive on $K$. In the recent work \cite{Nie-ATKMP}
by the author, a method is given on how to check whether or not
a given $\A$-tms $y\in \re^{\A}$ belongs to the cone $\mathscr{R}_{\A}(K)$.
In particular, by the method in \cite{Nie-ATKMP},
we can check whether or not a given form $f \in \re[x]_d$ ($d$ is even)
is a sum of even powers of real linear forms,
and we can check whether or not
a given symmetric matrix is completely positive.
In \cite{Nie-ATKMP}, the $\A$-tms $y$ is assumed to be known, i.e.,
all the entries $y_\af$ ($\af \in \A$) are given.
However, in some applications, we often do not know all the moment values $y_\af$,
but only know they satisfy some linear equations.
For instance, how do we know whether there exists an $\A$-tms,
which admits a measure supported in the circle $x_1^2+x_2^2=1$
and satisfies the linear equations \reff{eqn:y224060=0}?
In such occasions, we often need to know whether or not
there exists $y \in \mathscr{R}_{\A}(K)$ satisfying the given equations.
The method in \cite{Nie-ATKMP} cannot solve such questions.

Considering the above, we study more general
linear optimization problems with the cone $\mathscr{R}_{\A}(K)$.
That is, we discuss how to minimize a linear objective function in $y \in \re^{\A}$,
subject to linear equations in $y$ and the membership constraint
$y \in \mathscr{R}_{\A}(K)$. If the objective does not depend on $y$
(i.e., it is a constant), then the problem is reduced to checking
whether exists $y \in \mathscr{R}_{\A}(K)$ satisfying
a set of linear equations. This is a feasibility question.
Linear optimization problems with
the cone $\mathscr{P}_{\A}(K)$ will also be studied.

First, we study properties of the cones
$\mathscr{P}_\A(K)$ and $\mathscr{R}_\A(K)$.
We characterize their interiors, prove their closeness and dual relationship,
i.e., $\mathscr{R}_\A(K)$ is the dual cone of $\mathscr{P}_\A(K)$.
We construct a convergent hierarchy of semidefnite relaxations for each of them.
%
%We also show how to check the memberships in $\mathscr{P}_\A(K)$ and $\mathscr{R}_\A(K)$.
%
This will be shown in Section~\ref{sec:intclose}.

Second, we study how to solve linear optimization problems
with cones $\mathscr{P}_\A(K)$ and $\mathscr{R}_\A(K)$.
A semidefinite algorithm is proposed for solving them.
Its asymptotic and finite convergence are proved.
A stopping criterion is also given.
This will be shown in Section~\ref{sec:liopt}.

Third, we study how to check whether an affine subspace intersects
the cone $\mathscr{P}_\A(K)$ or $\mathscr{R}_\A(K)$.
If they intersect, we show how to find a point in the intersection.
If they do not, we prove certificates for the non-intersecting.
This will be shown in Section~\ref{sec:feas}.

We begin with a review of some basics in the field
in Section~\ref{sec:prlim}.

\section{Preliminaries}  \label{sec:prlim}
\setcounter{equation}{0}

\noindent
{\bf Notation} \,
For $t\in \re$, $\lceil t\rceil$ (resp., $\lfloor t\rfloor$)
denotes the smallest integer not smaller
(resp., the largest integer not greater) than $t$.
For $k \in \N$, denote $[k]:=\{1,\ldots,k\}$.
For a tms $z$, denote by $z|_{\A}$ the subvector of $z$
whose indices are in $\A$.
When $\A=\N_d^n$, we simply denote $z|_d := z|_{\N_d^n}$.
For a set $S \subseteq \re^n$, $|S|$ denotes its cardinality,
and $int(S)$ denotes its interior.
The superscript $^T$ denotes the transpose of a matrix or vector.
For $u\in \re^N$ and $r \geq 0$, denote $\| u \|_2 := \sqrt{u^Tu}$
and $B(u,r) :=\{x\in \re^n \mid \|x-u\|_2 \leq r\}$.
For a polynomial $p\in \re[x]$, $\|p\|_2$ denotes the $2$-norm
of the coefficient vector of $p$.
%Denote by $\mathbb{S}^k=\{x\in \re^{k+1}:\, \|x\|_2 =1 \}$
%the $k$-dimensional unit sphere.
%Denote by $\mc{S}^N$ the space of $N\times N$ real symmetric matrices.
For a matrix $A$, $\|A\|_F$ denotes its Frobenius norm.
%Recall that $\|A\|_2 \leq \|A\|_F$.
If a symmetric matrix $X$ is positive semidefinite (resp., definite),
we write $X\succeq 0$ (resp., $X\succ 0$).

\subsection{Riesz functionals, localizing matrices and flatness}

Let $\A \subseteq \N^n$.
An $\A$-tms $y$ defines a Riesz functional $\mathscr{L}_y$ acting on $\re[x]_{\A}$ as
{\small
\[
\mathscr{L}_y \Big( \sum_{\af\in\A} p_\af x^\af \Big ) :=
\sum_{\af\in\A} p_\af  y_\af.
\]
\noindent}Denote
$\langle p, y \rangle := \mathscr{L}_y(p)$ for convenience.
We say that $\mathscr{L}_y$ is {\it $K$-positive} if
\[
\mathscr{L}_y(p) \geq 0 \quad \forall \,
p \in  \mathscr{P}_{\A}(K),
\]
and $\mathscr{L}_y$ is {\it strictly} $K$-positive if
\[
\mathscr{L}_y(p) > 0 \quad \forall \,
p \in  \mathscr{P}_{\A}(K): \, p|_K \not\equiv 0.
\]
As is well known, $\mathscr{L}_y$ being $K$-positive is a necessary condition
for $y$ to admit a $K$-measure. The reverse is also true
if $K$ is compact and $\re[x]_{\A}$ is {\it $K$-full}
(i.e., there exists $p\in \re[x]_{\A}$ such that $p>0$ on $K$)
(cf. \cite[Theorem~2.2]{FiNi2012}).
We refer to the appendix for how to check whether $\re[x]_{\A}$ is $K$-full or not.

For $q\in \re[x]_{2k}$,
define $L_{q}^{(k)}(z)$ to be the symmetric matrix such that
\be \label{df:Loc-Mat}
\mathscr{L}_z(qp^2) \, = \, p^T
\left( L_{q}^{(k)}(z) \right)   p \quad \forall
p \in \re[x]: \, \deg(qp^2) \leq 2k.
\ee
(For convenience, we still use $p$ to denote the vector of coefficients of
a polynomial $p$, indexed by monomial powers $\af \in \N^n$.)
The matrix $L_{q}^{(k)}(z)$ is called the  $k$-th order
{\it localizing matrix} of $q$ generated by $z$.
When $q=1$, the matrix
\[
M_k(z) := L_{q}^{(k)}(z)
\]
is called the $k$-th order {\it moment matrix} of $z$.
The rows and columns of $L_{q}^{(k)}(z)$ are indexed by $\af \in \N^n$.
We refer to \cite{LasBok,Lau} for moment and localizing matrices.

Let $K$ be as in \reff{def:K} and $g_0=1$.
A necessary condition for $z \in \mathscr{R}_{2k}(K)$ is
\be  \label{loc-M>=0}
L_{h_i}^{(k)}(z) = 0 \, ( i =1,\ldots, m_1), \quad
L_{g_j}^{(k)}(z) \succeq 0  \, ( j =0,1,\ldots, m_2).
\ee
(Cf.~\cite{CF05,Nie-ATKMP}.)
Define the integer $d_K$ as ($h_i$, $g_j$ are from \reff{def:K})
\be \label{df:dg}
d_K  := \max_{i \in [m_1],  j \in [m_2]}
\{1, \lceil \deg(h_i)/2 \rceil, \lceil \deg(g_j)/2 \rceil\} .
\ee
In addition to \reff{loc-M>=0}, if $z$ also satisfies the rank condition
\be \label{cond:FEC}
\rank \, M_{k-d_K}(z) = \rank \, M_k(z),
\ee
then $z$ admits a unique $K$-measure, which is finitely atomic
(cf.~Curto and Fialkow \cite{CF05}).
For convenience, throughout the paper, we simply say $z$ is {\it flat}
if \reff{loc-M>=0} and \reff{cond:FEC} hold for $z$.
For a flat tms, its finitely atomic representing measure can be found
by solving some eigenvalue problems (cf.~Henrion and Lasserre~\cite{HenLas05}).
Flatness is very useful for solving truncated moment problems,
as shown by Curto and Fialkow \cite{CF96,CF98,CF05}.
A nice exposition for flatness can also be found in Laurent \cite{Lau05}.

For $z\in \re^{\N_{2k}^n}$ and $y\in \re^{\A}$, if $z|_{\A} = y$,
we say that $z$ is an {\it extension} of $y$,
or equivalently, $y$ is a {\it truncation} of $z$.
Clearly, if $z$ is flat and $y=z|_\A$, then $y$ admits a $K$-measure.
In such case, we say $z$ is a flat extension of $y$.
Thus, the existence of a $K$-representing measure for $y$
can be determined by investigating whether
$y$ has a flat extension or not.
This approach has been exploited in \cite{HN4,Nie-ATKMP}.

\subsection{Ideals, quadratic modules and positive polynomials}
\label{sec:2.2}

A subset $I \subseteq \re[x]$ is called an {\it ideal} if
$I + I \subseteq I$ and $ I \cdot \re[x] \subseteq I$.
For a tuple $p=(p_1,\ldots,p_m)$ of polynomials in $\re[x]$,
denote by $I(p)$ the ideal generated by $p_1,\ldots, p_m$,
which is the set
$p_1 \re[x] + \cdots + p_m \re[x]$.
%%%%%%%%%%%%%%%%%%%%%%%%%%%%%%%%%%%%%%%%%%%%
\iffalse

A variety is a subset of $\cpx^n$ that consists of
common zeros of a set of polynomials.
A real variety is the intersection of a variety with $\re^n$.

Denote
\[
V_{\cpx}(p) := \{v\in \cpx^n: \, p(v) = 0 \}, \quad
V_{\re}(p)  := \{v\in \re^n: \, p(v) = 0 \}.
\]

Every set $T \subseteq \cpx^n$ is contained in a variety.
The smallest one containing $T$
is called the {\it Zariski closure} of $T$, denoted as $Zar(T)$.
In the Zariski topology on $\cpx^n$, varieties are called closed sets,
and complements of varieties are called open sets.

\fi
%%%%%%%%%%%%%%%%%%%
A polynomial $f$ is called a {\it sum of squares} (SOS)
if there exist $f_1,\ldots,f_k \in \re[x]$ such that
$f=f_1^2+\cdots+f_k^2$.
The cone of all SOS polynomials in $n$ variables and of degree $d$
is denoted by $\Sig_{n,d}$.
We refer to Reznick~\cite{Rez00} for a survey on SOS polynomials.

Let $h=(h_1,\ldots,h_{m_1})$ and $g=(g_1,\ldots,g_{m_2})$
be as in \reff{def:K}. Denote
%
%\be \label{t-ideal:h}
%I_{2k}(h)  =
%\left\{
%\left.  \sum_{i=1}^{m_1} h_i \phi_i \right|
% \mbox{ each }  \deg(h_i\phi_i) \leq 2k
%\right\},
%\ee
%
\be \label{t-ideal:h}
I_{\ell}(h)  = h_1 \re[x]_{\ell-\deg(h_1)}
+ \cdots + h_{m_1} \re[x]_{\ell-\deg(h_{m_1})},
\ee
%
%\be \label{qmod-g}
%Q_k(g)  =
%\left\{
%\left. \sum_{j=0}^{m_2} g_j \sig_j  \right|
%\baray{c}
%\mbox{ each } \deg(\sig_jg_j) \leq 2k \\
%\mbox{ and } \sig_j \mbox{ is SOS}
%\earay
%\right\}.
%\ee
%
\be \label{qmod-g}
Q_k(g)  =  \Sig_{n,2k} + g_1 \Sig_{n,2k-\deg(g_1)} +
\cdots + g_{m_2} \Sig_{n,2k-\deg(g_{m_2})}.
\ee
Clearly, $I(h) = \cup_{k\in \N} I_{2k}(h)$.
The union $Q(g):= \cup_{k\in \N} Q_k(g)$
is called the {\it quadratic module} generated by $g$.
Clearly, if $f\in I(h) +  Q(g)$, then $f|_K \geq 0$.
The converse is also true if $p|_K >0$ and $I(h)+Q(g)$ is archimedean
(i.e., there exists $R>0$ such that $R-\|x\|_2^2 \in I(h) + Q(g)$).
This is called Putinar's Positivstellensatz.

\begin{theorem}[Putinar, \cite{Put}]\label{thm:PutThm}
Let $K$ be as in \reff{def:K}. Suppose $I(h)+Q(g)$ is archimedean.
If $f\in \re[x]$ is positive on $K$, then $f\in I(h) + Q(g)$.
\end{theorem}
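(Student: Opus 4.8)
\medskip
\noindent\textbf{Proof idea.}\,
The plan is to argue by contradiction, combining a Hahn--Banach separation in the vector space $\re[x]$ with a Gelfand--Naimark--Segal type construction that turns an abstract positive linear functional into an honest representing measure supported in $K$. Write $M := I(h)+Q(g)$. The first step is to record the structural properties of $M$: it is a quadratic module (closed under addition and under multiplication by squares of polynomials), it contains $1=1^2$, and by hypothesis it is archimedean, so $R-\|x\|_2^2\in M$ for some $R>0$. From archimedeanness one deduces the standard fact that for every $p\in\re[x]$ there is an $N_p>0$ with $N_p\pm p\in M$; in particular $1\pm tp\in M$ for all sufficiently small $t>0$, so $1$ is an internal (algebraic interior) point of the convex cone $M$.

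Now suppose, for contradiction, that $f>0$ on $K$ but $f\notin M$. Since $M$ is a convex cone with an internal point and $f$ lies outside it, the Eidelheit separation theorem produces a nonzero linear functional $L:\re[x]\to\re$ with $L\ge 0$ on $M$ and $L(f)\le 0$. Using $N_p\pm p\in M$ one checks that $L(1)>0$ (otherwise $L$ vanishes identically), so after rescaling we may take $L(1)=1$. Since every sum of squares lies in $M$, the form $(p,q)\mapsto L(pq)$ is positive semidefinite; since $\pm h_iq\in I(h)\subseteq M$ for all $q$, we have $L(h_iq)=0$; and since $g_jp^2\in M$, we have $L(g_jp^2)\ge 0$.

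The heart of the argument is the GNS construction. Let $\mathcal{I}=\{p:L(p^2)=0\}$; a routine computation using Cauchy--Schwarz for the semidefinite form and the bound $L(x_i^2p^2)\le R\,L(p^2)$ (which follows from $(R-x_i^2)p^2\in M$) shows that $\mathcal{I}$ is an ideal, so $\re[x]/\mathcal{I}$ carries the inner product $\langle \bar p,\bar q\rangle=L(pq)$. Let $H$ be its Hilbert space completion and $\xi$ the image of $1$. Multiplication by $x_i$ descends to a symmetric operator on $\re[x]/\mathcal{I}$ that is bounded, again by the archimedean estimate, hence extends to a bounded self-adjoint operator $X_i$ on $H$, and the $X_i$ pairwise commute. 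The spectral theorem for a commuting tuple of bounded self-adjoint operators yields a projection-valued measure $E$ on $\re^n$, supported on the (compact) joint spectrum, and $\mu(\cdot):=\langle E(\cdot)\xi,\xi\rangle$ is a compactly supported probability measure with $L(p)=\int p\,\mathtt{d}\mu$ for every $p\in\re[x]$. One then localizes the support: $\int h_i^2\,\mathtt{d}\mu=L(h_i^2)=0$ forces $\mathrm{supp}(\mu)\subseteq\{h=0\}$, while the inequalities $L(g_jp^2)\ge 0$, after approximating $\sqrt{\phi}$ uniformly by polynomials on the compact set $\mathrm{supp}(\mu)$, give $\int g_j\phi\,\mathtt{d}\mu\ge 0$ for every continuous $\phi\ge 0$ on $\mathrm{supp}(\mu)$, hence $\mathrm{supp}(\mu)\subseteq\{g_j\ge 0\}$ for all $j$; therefore $\mathrm{supp}(\mu)\subseteq K$. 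Since $K$ is compact and $f>0$ on $K$, we conclude $L(f)=\int_K f\,\mathtt{d}\mu\ge \big(\min_K f\big)\mu(K)>0$, contradicting $L(f)\le 0$. Hence $f\in M=I(h)+Q(g)$.

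I expect the main obstacle to be the rigorous passage from the abstract functional $L$ to the measure $\mu$ with support contained in $K$: one must check that the multiplication operators are genuinely bounded and self-adjoint on the completed space --- this is precisely where archimedeanness is indispensable --- invoke the correct multivariate spectral theorem, and justify the almost-everywhere sign statements that confine $\mathrm{supp}(\mu)$ to $K$. A less self-contained alternative is to derive the statement from Schm\"{u}dgen's Positivstellensatz (itself a consequence of the Krivine--Stengle Positivstellensatz applied to the finitely many products $\prod_{j\in S}g_j$, $S\subseteq[m_2]$) together with the standard archimedean reduction that passes from the preordering generated by $g$ to the quadratic module $Q(g)$; I would fall back on that route if the operator-theoretic estimates became unwieldy.
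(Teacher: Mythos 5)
The paper does not prove this statement: it is quoted as a classical theorem with a citation to \cite{Put}, so there is no in-paper argument to compare yours against. Judged on its own, your sketch correctly reproduces the standard proof (essentially Putinar's original one): archimedeanness makes $1$ an algebraic interior point of the quadratic module $M=I(h)+Q(g)$, Eidelheit separation yields a nonzero linear functional $L\ge 0$ on $M$ with $L(f)\le 0$ and (after normalization) $L(1)=1$, the GNS construction together with the archimedean bounds $L(x_i^2p^2)\le R\,L(p^2)$ produces commuting bounded self-adjoint multiplication operators on the completion of $\re[x]/\mathcal{I}$, and the joint spectral measure gives a compactly supported representing measure whose support is forced into $K$ by $L(h_i^2)=0$ and $L(g_j p^2)\ge 0$; positivity of $f$ on $K$ then contradicts $L(f)\le 0$. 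The two technical points you flag --- boundedness and self-adjointness of the $X_i$, and the support localization --- are indeed exactly where the work lies, and both go through as you describe. One caution on your fallback route: deriving Putinar from Schm\"udgen is not just a ``standard archimedean reduction'' from the preordering to the quadratic module; that passage requires a genuine additional argument (e.g.\ Jacobi's representation theorem for archimedean modules, or an induction on the number of generators), so the operator-theoretic route you put first is actually the more self-contained of the two.
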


Let $h$ and $g$ be as in \reff{def:K}. Denote
{\small
\be \label{df:Phi(g)}
\Phi_k(g) := \left\{
\left. w \in \re^{ \N_{2k}^n } \right|
L_{g_j}^{(k)}(w) \succeq 0, \,\,\,
j=0,1,\ldots, m_2 \right\},
\ee
\be \label{df:Ek(h)}
E_k(h) := \left\{
\left. w \in \re^{ \N_{2k}^n } \right|
L_{h_i}^{(k)}(w) = 0, \ \,\, \,
i=1,\ldots,m_1 \right\}.
\ee
}
The set $I_{2k}(h)+Q_k(g)$ is dual to $\Phi_k(g) \cap E_k(h)$
(cf. \cite{LasBok,Lau,Nie-ATKMP}), i.e.,
\be \label{<p,z>=0:qmod}
\langle p, z \rangle \geq 0 \qquad
\forall \, p \in I_{2k}(h)+Q_k(g), \, \,
\forall \, z \in \Phi_k(g) \cap E_k(h).
\ee

\section{Properties of $\mathscr{R}_{\A}(K)$ and $\mathscr{P}_{\A}(K)$}
\label{sec:intclose}
\setcounter{equation}{0}

This section studies geometric properties
of the cones $\mathscr{R}_{\A}(K)$ and $\mathscr{P}_{\A}(K)$.

\subsection{Interiors, closedness and duality}

Recall that $\re[x]_{\A}$ is {\it $K$-full} if there exists
$p \in \re[x]_{\A}$ such that $p>0$ on $K$.
%
%Since $K$ is compact and  $\re[x]_{\A}$ is $K$-full,
%the interior of the cone $\mathscr{P}_{\A}(K)$
%is clearly the set of polynomials in $\re[x]_{\A}$
%that are positive on $K$.
%The interior of the cone $\mathscr{R}_{\A}(K)$ can be characterized
%via $\A$-Riesz functionals.
%
The interiors of $\mathscr{R}_{\A}(K)$ and $\mathscr{P}_{\A}(K)$
can be characterized as follows.

\begin{lem} \label{lm:int-ARz}
Let $K \subseteq \re^n$ be a nonempty compact set.
Suppose $\A \subseteq \N^n$ is finite and $\re[x]_{\A}$ is $K$-full.
Then we have:
\bit

\item [(i)] A polynomial $f \in \re[x]_{\A}$ lies in the interior of $\mathscr{P}_{\A}(K)$
if and only if $f>0$ on $K$.

\item [(ii)] An $\A$-tms $y \in \re^{\A}$ lies in the interior of $\mathscr{R}_{\A}(K)$
if and only if the Riesz functional $\mathscr{L}_y$ is strictly $K$-positive.

\eit
\end{lem}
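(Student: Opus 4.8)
The plan is to prove both parts by a combination of soft convex-geometry arguments and the compactness of $K$. For part (i), the forward direction is the easy one: if $f$ lies in the interior of $\mathscr{P}_{\A}(K)$ and were to vanish at some $u \in K$, then since $\re[x]_{\A}$ is $K$-full there is $p \in \re[x]_{\A}$ with $p>0$ on $K$, and for small $\veps>0$ the polynomial $f - \veps p$ is still in a neighborhood of $f$ but takes the negative value $-\veps p(u)$ at $u$, contradicting that this neighborhood lies in $\mathscr{P}_{\A}(K)$. For the reverse direction, suppose $f>0$ on $K$; by compactness $f \geq \eta$ on $K$ for some $\eta>0$, and every $g \in \re[x]_{\A}$ within $\ell_2$-distance $\rho$ of $f$ satisfies $|g(u)-f(u)| \leq C\rho$ uniformly for $u \in K$ (again using compactness of $K$ to bound the monomials $x^\af$, $\af \in \A$). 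Taking $\rho < \eta/C$ shows $g>0$ on $K$, so $f \in int(\mathscr{P}_{\A}(K))$.

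For part (ii), I would argue that $\mathscr{R}_{\A}(K)$ is a closed convex cone whose dual is $\mathscr{P}_{\A}(K)$. Closedness: since $K$ is compact and $\re[x]_{\A}$ is $K$-full, one shows $\mathscr{R}_{\A}(K)$ is a closed cone --- this is essentially the classical Tchakaloff-type argument, and I expect it to be established (or at least stated) earlier in this very section; in any case it follows from the fact that moment sequences of probability measures on a compact set form a compact convex set, so scaling gives a closed cone. Granting $\overline{\mathscr{R}_{\A}(K)} = \mathscr{R}_{\A}(K)$ and the duality $\mathscr{R}_{\A}(K) = \mathscr{P}_{\A}(K)^*$, the interior of a closed convex cone $C$ in a finite-dimensional space consists precisely of those $y$ with $\langle y, p\rangle > 0$ for all nonzero $p$ in the dual cone $C^*$ --- provided $C^*$ is pointed (contains no line), equivalently $C$ is full-dimensional. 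Here the relevant "nonzero on $K$" condition replaces "nonzero" because $\mathscr{P}_{\A}(K)$ may contain polynomials that vanish identically on $K$ (these span the lineality space, on which every $\mathscr{L}_y$ automatically vanishes), so the correct statement is: $y \in int(\mathscr{R}_{\A}(K))$ iff $\mathscr{L}_y(p)>0$ for all $p \in \mathscr{P}_{\A}(K)$ with $p|_K \not\equiv 0$ --- which is exactly strict $K$-positivity.

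Concretely, for the reverse direction of (ii) I would pick a basis $p_1,\dots,p_N$ of a complement in $\re[x]_{\A}$ to the subspace $\{p : p|_K \equiv 0\}$ with each $p_i|_K \not\equiv 0$, normalize, and use that $\mathscr{L}_y(p_i) > 0$ together with a compactness/continuity argument on the "sphere" of polynomials with $\|p\|_2 = 1$ and $p|_K \not\equiv 0$ --- but this sphere is not compact in the relevant quotient, so instead I would work modulo the lineality space $V := \{p : p|_K \equiv 0\}$: the set $\{p \in \mathscr{P}_{\A}(K) : \|p + V\| = 1\}$ is compact, and $p \mapsto \mathscr{L}_y(p)$ descends to it and is continuous and strictly positive, hence bounded below by some $c>0$; a first-order perturbation $\|z - y\|$ small then keeps $\mathscr{L}_z(p) \geq c/2 > 0$ on this set, so $z$ is $K$-positive, hence $z \in \mathscr{R}_{\A}(K)$ by the $K$-full hypothesis and \cite[Theorem~2.2]{FiNi2012}. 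For the forward direction, if $y \in int(\mathscr{R}_{\A}(K))$ and some $p \in \mathscr{P}_{\A}(K)$ with $p|_K \not\equiv 0$ had $\mathscr{L}_y(p) \leq 0$, then $\mathscr{L}_y(p) = 0$ (by $K$-positivity of interior points), and $y - \veps y'$ would violate $K$-positivity for suitable $y'$, a contradiction. The main obstacle I anticipate is handling the lineality space $V$ cleanly: one must be careful that "interior" is taken in $\re^{\A}$ (resp. $\re[x]_{\A}$), that $\mathscr{P}_{\A}(K)$ is generally not pointed, and that the pairing between $\re^{\A}$ and $\re[x]_{\A}$ correctly identifies $V^\perp$ with the affine hull of $\mathscr{R}_{\A}(K)$ --- once the quotient bookkeeping is set up, the compactness arguments are routine.
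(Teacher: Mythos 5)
Your proof of (i) is correct and essentially the same as the paper's. For (ii) you end up relying on the same two pillars as the paper --- a compact normalized slice of $\mathscr{P}_{\A}(K)$ on which $\mathscr{L}_y$ is bounded below by a positive constant, plus Theorem~2.2 of \cite{FiNi2012} to convert $K$-positivity of all nearby functionals into membership in $\mathscr{R}_{\A}(K)$ --- but you normalize differently. The paper fixes a probability measure $\tau$ with $\mbox{supp}(\tau)=K$ (citing \cite{Rog85}), slices the cone by $\int p\,\mathtt{d}\tau=1$, and gets for free the explicit strictly $K$-positive point $z=\int [x]_{\A}\,\mathtt{d}\tau$, which it then uses for the forward implication by writing $\mathscr{L}_y=\mathscr{L}_{y-\eps z}+\eps\mathscr{L}_z$ with $y-\eps z\in\mathscr{R}_{\A}(K)$; you instead normalize by the quotient norm modulo $V=\{p\in\re[x]_{\A}: p|_K\equiv 0\}$ and prove the forward implication by contradiction via a perturbation $y-\eps y'$, where $y'$ should be taken to be the $\A$-tms of a Dirac measure at a point where $p>0$ (you should say this explicitly). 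Both routes are of comparable length; the measure $\tau$ is the slicker device because it turns the awkward condition ``$p|_K\not\equiv 0$'' into the single linear condition $\int p\,\mathtt{d}\tau>0$. One caution about the lineality-space obstacle you flag: it is real but cannot be ``handled cleanly by quotient bookkeeping,'' because if $V\neq\{0\}$ then $\mathscr{R}_{\A}(K)\subseteq V^\perp$ is contained in a proper subspace and has empty interior, while strictly $K$-positive tms' (such as $z$) still exist --- so statement (ii) would literally fail, and indeed a small perturbation $w$ of $y$ need not annihilate $V$ and hence cannot be $K$-positive. The paper's own proof has the same tacit hypothesis (its slice $\mathscr{P}_{\A}(K,\tau)$ is compact only when $V=\{0\}$, i.e., when the monomials $x^\af$, $\af\in\A$, are linearly independent as functions on $K$); under that standing assumption your argument is complete.
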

\begin{proof}
(i) If $f>0$ on $K$, then $f \in \mbox{int}\big(\mathscr{P}_{\A}(K) \big)$.
This is because $f + q > 0$ on $K$
for all $q \in \re[x]_\A$ with sufficiently small coefficients
(the set $K$ is compact).
Conversely, suppose $f \in \mbox{int}\big(\mathscr{P}_{\A}(K) \big)$.
Since $\re[x]_\A$ is $K$-full, there exists $p \in \re[x]_\A$ with $p>0$ on $K$.
Then $f - \eps p \in \mathscr{P}_{\A}(K)$ for some $\eps>0$. So,
$f \geq \eps p > 0$ on $K$.

(ii) Let $\tau$ be a probability measure on $\re^n$ whose support equals $K$.
(Because $K$ is nonempty and compact, such a measure always exists,
as shown in Rogers~\cite{Rog85}.)
For all $p \in \mathscr{P}_{\A}(K)$, $p|_K \not\equiv 0$
if and only if $\int p \mathtt{d}\tau >0$. Let
{\small
\[
z = \int [x]_{\A} \mathtt{d} \tau \in \re^\A, \quad
\mathscr{P}_{\A}(K,\tau) = \left \{ p \in \mathscr{P}_{\A}(K):\,
 \int p \mathtt{d} \tau = 1  \right \}.
\]
} \noindent
(The $[x]_{\A}$ is the vector of monomials $x^\af$
with $\af \in \A$.) Note that
an $\A$-tms $w$ is $K$-positive (resp., strictly $K$-positive)
if and only if $\mathscr{L}_w(p) \geq 0$ (resp., $>0$)
for all $p \in \mathscr{P}_{\A}(K,\tau)$.
So, $z$ is strictly $K$-positive.

\noindent
$``\Rightarrow"$
Suppose $y \in  int (\mathscr{R}_{\A}(K))$. Then
$w:=y-\eps z \in \mathscr{R}_{\A}(K)$ for some $\eps>0$ and
\[
\mathscr{L}_y(p) = \mathscr{L}_w(p) + \eps \mathscr{L}_{z}(p)
\geq \eps \mathscr{L}_{z}(p) > 0
\]
for all $p \in \mathscr{P}_{\A}(K,\tau)$.
So, $\mathscr{L}_y$ is strictly $K$-positive.

\smallskip
\noindent
$``\Leftarrow"$ Suppose $\mathscr{L}_y$ is strictly $K$-positive.
The set $\mathscr{P}_{\A}(K,\tau)$ is compact. Let
\[
\eps =  \min\left \{\mathscr{L}_y(p) :\,
p \in \mathscr{P}_{\A}(K,\tau)  \right \} > 0,
\]
\[
M  =  \max \left \{ |\langle z, p \rangle| :\,
z \in \re^{\A}, \|z\|_2 =1,  p \in \mathscr{P}_{\A}(K,\tau)  \right \}.
\]
For all $w\in \re^{\A}$ with $\|w-y\|_2 < \frac{\eps}{2M}$,
it holds that for all $p \in \mathscr{P}_{\A}(K,\tau)$,
\[
\mathscr{L}_w(p) = \mathscr{L}_y(p) + \mathscr{L}_{w-y}(p)
\geq (\eps - \|w-y\|_2 M ) > 0.
\]
This means that all such $w$ are $K$-positive.
Because $\re[x]_{\A}$ is $K$-full, by Theorem~2.2 of \cite{FiNi2012},
every such $w$ belongs to $\mathscr{R}_{\A}(K)$.
So, $y$ is an interior point of $\mathscr{R}_{\A}(K)$.
\qed
\end{proof}

The dual cone of $\mathscr{P}_{\A}(K)$ is defined as
\[
\mathscr{P}_{\A}(K)^* := \{ y \in \re^{\A}: \,
\langle p, y\rangle \geq 0 \,\, \forall \, p \in \mathscr{P}_{\A}(K) \}.
\]
When $\A = \N_d^n$ and $K$ is compact,
$\mathscr{P}_d(K)^*=\mathscr{R}_d(K)$
(cf. Tchakaloff~\cite{Tch}, Laurent~\cite[Section~5.2]{Lau}).
For more general $\A$, a similar result holds.

\begin{prop} \label{lm:pro+dual}
Let $K \subseteq \re^n$ be a nonempty compact set.
Suppose $\A \subseteq \N^n$ is finite and $\re[x]_{\A}$ is $K$-full.
Then, the cones $\mathscr{R}_{\A}(K)$ and $\mathscr{P}_{\A}(K)$ are
convex, closed and have nonempty interior.
Moreover, it holds that
\be \label{Pdual=R}
\mathscr{R}_{\A}(K) = \mathscr{P}_{\A}(K)^*.
\ee
\end{prop}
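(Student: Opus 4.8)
The plan is to establish the four assertions in the order: convexity, nonempty interior, closedness, and finally the duality relation \reff{Pdual=R}. Convexity is immediate: $\mathscr{P}_{\A}(K)$ is an intersection of the halfspaces $\{p : p(u)\geq 0\}$ over $u\in K$, hence convex; and $\mathscr{R}_{\A}(K)$ is convex because a convex combination $\lambda y_1+(1-\lambda)y_2$ of $\A$-tms's admitting $K$-measures $\mu_1,\mu_2$ admits the $K$-measure $\lambda\mu_1+(1-\lambda)\mu_2$. Nonempty interior is also already in hand: since $\re[x]_{\A}$ is $K$-full there is $p\in\re[x]_{\A}$ with $p>0$ on $K$, so by Lemma~\ref{lm:int-ARz}(i) we have $p\in int(\mathscr{P}_{\A}(K))$; and for $\mathscr{R}_{\A}(K)$, taking $\tau$ a probability measure with $\mathrm{supp}(\tau)=K$ (as in the proof of Lemma~\ref{lm:int-ARz}), the $\A$-tms $z=\int[x]_{\A}\mathtt{d}\tau$ is strictly $K$-positive, hence lies in $int(\mathscr{R}_{\A}(K))$ by Lemma~\ref{lm:int-ARz}(ii).

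For closedness of $\mathscr{P}_{\A}(K)$: if $f_k\to f$ in $\re[x]_{\A}$ with each $f_k\geq 0$ on $K$, then pointwise on $K$ we get $f(u)=\lim f_k(u)\geq 0$, so $f\in\mathscr{P}_{\A}(K)$. Closedness of $\mathscr{R}_{\A}(K)$ is the first genuinely nontrivial point. Here I would use the atomic bound from \cite[Proposition~3.3]{Nie-ATKMP}: every $y\in\mathscr{R}_{\A}(K)$ admits an $r$-atomic $K$-measure with $r\leq|\A|$. Given a convergent sequence $y_k\to y$ with $y_k\in\mathscr{R}_{\A}(K)$, write $y_k=\sum_{i=1}^{N}\lambda_i^{(k)}[u_i^{(k)}]_{\A}$ with $N=|\A|$, $\lambda_i^{(k)}\geq 0$, $u_i^{(k)}\in K$ (allowing zero weights to pad to a common length $N$). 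Using $K$-fullness, fix $p\in\re[x]_{\A}$ with $p\geq 1$ on $K$ (after rescaling); then $\langle p,y_k\rangle=\sum_i\lambda_i^{(k)}p(u_i^{(k)})\geq\sum_i\lambda_i^{(k)}$, and since $\langle p,y_k\rangle\to\langle p,y\rangle$ the total masses $\sum_i\lambda_i^{(k)}$ are bounded. By compactness of $K$ and of a bounded interval for each weight, pass to a subsequence so that $\lambda_i^{(k)}\to\lambda_i\geq 0$ and $u_i^{(k)}\to u_i\in K$ for every $i$; continuity of $u\mapsto[u]_{\A}$ gives $y=\sum_i\lambda_i[u_i]_{\A}$, so $y$ admits the $K$-measure $\sum_i\lambda_i\delta_{u_i}$ and $y\in\mathscr{R}_{\A}(K)$.

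Finally, for \reff{Pdual=R}: the inclusion $\mathscr{R}_{\A}(K)\subseteq\mathscr{P}_{\A}(K)^*$ is clear, since for $y$ with $K$-measure $\mu$ and $p\in\mathscr{P}_{\A}(K)$ we have $\langle p,y\rangle=\int p\,\mathtt{d}\mu\geq 0$. For the reverse inclusion, suppose $y\in\re^{\A}\setminus\mathscr{R}_{\A}(K)$; since $\mathscr{R}_{\A}(K)$ is a nonempty closed convex cone, the separating hyperplane theorem yields $p\in\re^{\A}$ (viewed as a polynomial in $\re[x]_{\A}$ via the pairing) with $\langle p,w\rangle\geq 0$ for all $w\in\mathscr{R}_{\A}(K)$ but $\langle p,y\rangle<0$. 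Taking $w=[u]_{\A}$ for each $u\in K$ (a point mass is a $K$-measure) gives $p(u)\geq 0$ on $K$, i.e.\ $p\in\mathscr{P}_{\A}(K)$, while $\langle p,y\rangle<0$ shows $y\notin\mathscr{P}_{\A}(K)^*$. Hence $\mathscr{P}_{\A}(K)^*\subseteq\mathscr{R}_{\A}(K)$, completing the proof. The main obstacle is the closedness of $\mathscr{R}_{\A}(K)$, and the key enabling facts there are the uniform atomic bound $r\leq|\A|$ and the use of $K$-fullness to force boundedness of the total mass of the representing measures; everything else is routine.
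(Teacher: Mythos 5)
Your proof is correct, but it takes a genuinely different route from the paper on the two substantive points. The paper disposes of both the closedness of $\mathscr{R}_{\A}(K)$ and the inclusion $\mathscr{P}_{\A}(K)^* \subseteq \mathscr{R}_{\A}(K)$ by invoking, twice, the solvability theorem of \cite[Theorem~2.2]{FiNi2012} ($K$-positivity of $\mathscr{L}_y$ plus $K$-fullness and compactness of $K$ imply that $y$ admits a $K$-measure): a limit of $K$-positive functionals is $K$-positive, and membership in $\mathscr{P}_{\A}(K)^*$ is precisely $K$-positivity, so both claims follow in one line each. You instead prove closedness by hand, using the atomic bound $r \leq |\A|$ from \cite[Proposition~3.3]{Nie-ATKMP} together with $K$-fullness to control the total mass and a compactness/subsequence extraction on atoms and weights; you then deduce the duality from closedness via the separating hyperplane theorem and the observation that the Dirac moment vectors $[u]_{\A}$, $u \in K$, lie in $\mathscr{R}_{\A}(K)$. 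Your route is sound (the mass bound $\sum_i \lambda_i^{(k)} \leq \langle p, y_k\rangle$ with $p \geq 1$ on $K$ is exactly where $K$-fullness enters, and the cone separation argument correctly yields a functional nonnegative on the cone and negative at $y$), and it has the merit of effectively reproving the relevant case of the Riesz--Haviland-type theorem from the finite-atom representation, making the duality a consequence of closedness rather than of an external solvability result. The paper's route is shorter and treats both steps uniformly; note also that your argument still leans on \cite[Theorem~2.2]{FiNi2012} indirectly through Lemma~\ref{lm:int-ARz}(ii) when you establish that $\mathscr{R}_{\A}(K)$ has nonempty interior, so the dependence on that theorem is reduced but not eliminated.
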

\begin{proof}
Clearly, $\mathscr{R}_{\A}(K)$ and $\mathscr{P}_{\A}(K)$ are convex,
and $\mathscr{P}_{\A}(K)$ is closed.
The $K$-fullness of $\re[x]_{\A}$ implies that
there exists $p\in\mathscr{P}_{\A}(K)$ with $p>0$ on $K$.
So, $\mathscr{P}_{\A}(K)$ has nonempty interior,
since $p$ is an interior point, by Lemma~\ref{lm:int-ARz}.

We show that $\mathscr{R}_{\A}(K)$ is closed.
Let $\{y_k\}\subseteq \mathscr{R}_{\A}(K)$ be a sequence such that
$y_k \to y^* \in \re^{\A}$ as $k\to \infty$.
Note that each $\mathscr{L}_{y_k}$ is $K$-positive, i.e.,
\[
\mathscr{L}_{y_k}(p) = \langle p, y_k \rangle \geq 0 \, \quad
\forall \, p \in \mathscr{P}_{\A}(K).
\]
Letting $k\to \infty$ in the above, we get
\[
\mathscr{L}_{y^*}(p) = \langle p, y^* \rangle \geq 0 \, \quad
\forall \, p \in \mathscr{P}_{\A}(K).
\]
So, $\mathscr{L}_{y^*}$ is $K$-positive.
Since $\re[x]_{\A}$ is $K$-full, by Theorem~2.2 of \cite{FiNi2012},
we have $y^* \in \mathscr{R}_{\A}(K)$.
This implies that $\mathscr{R}_{\A}(K)$ is closed.

Next, we show that $\mathscr{R}_{\A}(K)$ has nonempty interior.
Let $z$ be the tms in the proof of Lemma~\ref{lm:int-ARz}(i).
The Riesz functional $\mathscr{L}_z$
is strictly $K$-positive. By Lemma~\ref{lm:int-ARz},
$z$ is an interior point of $\mathscr{R}_{\A}(K)$.

Last, we show that \reff{Pdual=R} is true.
Clearly, $\mathscr{R}_{\A}(K) \subseteq  \mathscr{P}_{\A}(K)^*$.
For all $y\in \mathscr{P}_{\A}(K)^*$,
$\mathscr{L}_y$ is $K$-positive. Since $\re[x]_{\A}$ is $K$-full,
by Theorem~2.2 of \cite{FiNi2012}, we have $y \in \mathscr{R}_{\A}(K)$.
So, \reff{Pdual=R} holds.
\qed
\end{proof}

\subsection{Semidefinite relaxations}
\label{sec:SDr}

First, we consider semidefinite relaxations for the cone $\mathscr{R}_{\A}(K)$.
Recall the notation $\Phi_k(g),E_k(h)$ from Subsection~\ref{sec:2.2}.
For each $k \in \N$, denote
\be \label{S:A^k(K)}
\mathscr{S}_{\A}^k(K) = \big\{ z|_{\A} :\,  z \in \Phi_k(g) \cap E_k(h) \big\}.
\ee
(If $k < \deg(\A)/2$, $\mathscr{S}_{\A}^k(K)$ is defined to be $\re^{\A}$, by default.)
Clearly, $\mathscr{R}_{\A}(K) \subseteq \mathscr{S}_{\A}^k(K)$ for all $k$.
This is because for every $y \in \mathscr{R}_{\A}(K)$,
we can always extend $y$ to a tms $z \in \mathscr{R}_{2k}(K)$ with $z|_{\A} =y$
(cf.~\cite[Prop.~3.3]{Nie-ATKMP}).
Each $\mathscr{S}_{\A}^k(K)$ is a semidefinite relaxation of $\mathscr{R}_{\A}(K)$,
since it is defined by linear matrix inequalities.
Clearly, $\mathscr{S}_{\A}^{k+1}(K) \subseteq \mathscr{S}_{\A}^k(K)$ for all $k$.
This results in the nesting containment relation:
\be \label{RAK:SDr:hier}
\mathscr{S}_{\A}^{1}(K) \supseteq  \cdots \supseteq
\mathscr{S}_{\A}^k(K)  \supseteq    \mathscr{S}_{\A}^{k+1}(K)
\supseteq \cdots  \supseteq \mathscr{R}_{\A}(K).
\ee

\begin{prop} \label{thm:cap:SkA}
Let $K \ne \emptyset$ be as in \reff{def:K}.
Suppose $I(h)+Q(g)$ is archimedean,
$\A \subseteq \N^n$ is finite and $\re[x]_{\A}$ is $K$-full.
Then, it holds that
{\small
\be \label{RAK=SDr}
\mathscr{R}_{\A}(K) = \bigcap_{k=1}^{\infty} \mathscr{S}_{\A}^k(K).
\ee
}
\end{prop}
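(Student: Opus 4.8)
The plan is to prove the nontrivial inclusion $\bigcap_{k=1}^\infty \mathscr{S}_{\A}^k(K) \subseteq \mathscr{R}_{\A}(K)$, since the reverse containment is already recorded in \reff{RAK:SDr:hier}. So fix $y \in \bigcap_{k\geq 1}\mathscr{S}_{\A}^k(K)$; I want to show $y$ admits a $K$-measure. By Theorem~2.2 of \cite{FiNi2012} and the $K$-fullness hypothesis, it suffices to show that $\mathscr{L}_y$ is $K$-positive, i.e. $\langle p, y\rangle \geq 0$ for every $p \in \mathscr{P}_{\A}(K)$.

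First I would fix such a $p$ and perturb: for arbitrary $\veps>0$, the polynomial $p+\veps p_0$ is strictly positive on $K$, where $p_0\in\re[x]_{\A}$ is a fixed polynomial with $p_0>0$ on $K$ (which exists by $K$-fullness), after possibly rescaling. Since $I(h)+Q(g)$ is archimedean, Putinar's Positivstellensatz (Theorem~\ref{thm:PutThm}) gives $p+\veps p_0 \in I(h)+Q(g)$, hence $p+\veps p_0 \in I_{2k}(h)+Q_k(g)$ for some $k$ large enough (depending on $\veps$). Now, because $y \in \mathscr{S}_{\A}^k(K)$, by definition there is an extension $z \in \Phi_k(g)\cap E_k(h)$ with $z|_{\A}=y$. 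The duality relation \reff{<p,z>=0:qmod} then yields $\langle p+\veps p_0, z\rangle \geq 0$. Since $p$ and $p_0$ both lie in $\re[x]_{\A}$ and $z|_{\A}=y$, we have $\langle p+\veps p_0, z\rangle = \langle p, y\rangle + \veps\langle p_0, y\rangle$. Thus $\langle p,y\rangle \geq -\veps\langle p_0,y\rangle$. Letting $\veps \to 0^+$ gives $\langle p, y\rangle \geq 0$, as desired. (One small point: $\langle p_0, y\rangle$ is a fixed finite number, so the bound is harmless in the limit.)

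Having established $K$-positivity of $\mathscr{L}_y$, I invoke Theorem~2.2 of \cite{FiNi2012} one more time: since $\re[x]_{\A}$ is $K$-full and $K$ is compact, $K$-positivity of $\mathscr{L}_y$ forces $y\in\mathscr{R}_{\A}(K)$. This completes the proof. Alternatively, one could argue by a compactness/weak-$*$ limit of representing measures of flat extensions, but the Putinar route above is cleaner and uses exactly the archimedean hypothesis that is present.

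The main obstacle, and the one point that needs care, is the bookkeeping around degrees: one must ensure that for each $\veps$ the certificate $p+\veps p_0 \in I_{2k}(h)+Q_k(g)$ holds for some \emph{specific finite} $k$, and then use membership of $y$ in that particular $\mathscr{S}_{\A}^k(K)$ — this is exactly where the full intersection over all $k$ (not just one) is needed, since the required $k$ grows as $\veps\to 0$. A secondary subtlety is that $p$ need not be strictly positive on $K$ (it may vanish somewhere), which is precisely why the perturbation by $\veps p_0$ is required before Putinar's theorem can be applied; without $K$-fullness there might be no such $p_0$, so that hypothesis is genuinely used here as well.
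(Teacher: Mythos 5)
Your proof is correct and is essentially the contrapositive of the paper's argument: both rely on perturbing a nonnegative $p\in\mathscr{P}_{\A}(K)$ by $\eps p_0$ with $p_0>0$ on $K$ (via $K$-fullness), applying Putinar's Positivstellensatz to land in some $I_{2k}(h)+Q_k(g)$, invoking the duality \reff{<p,z>=0:qmod} against an extension $z\in\Phi_k(g)\cap E_k(h)$ of $y$, and finally using Theorem~2.2 of \cite{FiNi2012} (which the paper packages as Proposition~\ref{lm:pro+dual}) to pass from $K$-positivity to a representing measure. The only cosmetic difference is that you verify $K$-positivity directly and let $\eps\to 0$, whereas the paper fixes $y\notin\mathscr{R}_{\A}(K)$, chooses $\eps$ small enough to keep $\langle p_1+\eps p_0, y\rangle<0$, and derives a contradiction at a single order $k_1$.
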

\begin{proof}
We already know that
$\mathscr{R}_{\A}(K) \subseteq \mathscr{S}_{\A}^k(K)$ for all $k$.
So, $\mathscr{R}_{\A}(K)$ is contained in the intersection
of the right hand side of \reff{RAK=SDr}.
To prove they are indeed equal, it is enough to show that
for all $y \not \in \mathscr{R}_{\A}(K)$, we have
$y\not\in \mathscr{S}_{\A}^k(K)$ if $k$ is big enough.
Choose such an arbitrary $y$. By \reff{Pdual=R}, we know
$y \not\in \mathscr{P}_{\A}(K)^* $, and there exists
$p_1 \in \mathscr{P}_{\A}(K)$ with $\langle p_1, y \rangle < 0$.
Let $p_0 \in \re[x]_{\A}$ be such that $p_0>0$ on $K$.
Then, for $\eps >0$ small, $p_2:= p_1 + \eps p_0 >0$ on $K$ and
$\langle p_2, y \rangle < 0$. Since $I(h)+Q(g)$ is archimedean,
we have $p_2 \in Q_{k_1}(g)+I_{2k_1}(h)$ for some $k_1$, by Theorem~\ref{thm:PutThm}.
If $y \in \mathscr{S}_{\A}^{k_1}(K)$, then $y=z|_{\A}$ for some
$ z \in \Phi_{k_1}(g) \cap E_{k_1}(h)$, and we get
\[
0> \langle p_2, y \rangle = \langle p_2, z\rangle  \geq 0,
\]
a contradiction. The latter inequality is because $p_2 \in Q_{k_1}(g)+I_{2k_1}(h)$
and $\Phi_{k_1}(g) \cap E_{k_1}(h)$ is dual to $Q_{k_1}(g)+I_{2k_1}(h)$
(cf.~\reff{<p,z>=0:qmod}).
So, $y \not \in \mathscr{S}_{\A}^{k_1}(K)$,
and \reff{RAK=SDr} holds.
\qed
\end{proof}

Proposition~\ref{thm:cap:SkA} shows that
the semidefinite relaxations $\mathscr{S}_{\A}^k(K)$
can approximate $\mathscr{R}_{\A}(K)$ arbitrarily well.
Indeed, we can prove $\mathscr{S}_{\A}^k(K)$ converges to $\mathscr{R}_{\A}(K)$
if we measure their distance by normalization.
For $f \in \re[x]_{\A}$, define
\[
\mathscr{S}_{\A}^k(K,f) = \{ y \in \mathscr{S}_{\A}^k(K): \, \langle f, y \rangle = 1\},
\]
\[
\mathscr{R}_{\A}(K,f) = \{ y \in \mathscr{R}_{\A}(K): \, \langle f, y \rangle = 1\}.
\]
Define the distance
\be \label{df:dist:SDr}
dist \Big(\mathscr{S}_{\A}^k(K,f), \mathscr{R}_{\A}(K,f) \Big) =
\max_{ z \in \mathscr{S}_{\A}^k(K,f) }
\min_{ y\in \mathscr{R}_{\A}(K,f)}  \|z-y\|_2.
\ee

\begin{prop} \label{pr:dist>>0}
Let $K \ne \emptyset$ be as in \reff{def:K} and
$\A \subseteq \N^n$ be finite.
Suppose $I(h)+Q(g)$ is archimedean.
If $f \in \re[x]_{\A}$ is positive on $K$, then
\be \label{mom:dist->0}
dist \left(\mathscr{S}_{\A}^k(K,f), \mathscr{R}_{\A}(K,f) \right) \, \to \, 0
\quad \mbox{ as } \, k\to \infty.
\ee
\end{prop}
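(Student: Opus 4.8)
The plan is to argue by contradiction using a compactness-and-subsequence argument on the normalized slices. Suppose \reff{mom:dist->0} fails. Then there is a constant $\dt>0$ and a sequence $z^{(k)} \in \mathscr{S}_{\A}^k(K,f)$ (along a subsequence of $k$'s, which I relabel) with $\min_{y\in \mathscr{R}_{\A}(K,f)}\|z^{(k)}-y\|_2 \geq \dt$ for all $k$. The first key step is to show the sequence $\{z^{(k)}\}$ is bounded in $\re^{\A}$. This is where positivity of $f$ on $K$ enters: since $I(h)+Q(g)$ is archimedean and $K$ is thereby compact, and since $f>0$ on $K$, there is a constant $c>0$ with $f \geq c$ on $K$; more importantly, for each coordinate $\af \in \A$ one can find $\lmd_\af>0$ with $\lmd_\af f \pm x^\af > 0$ on $K$, hence $\lmd_\af f \pm x^\af \in I(h)+Q(g)$ by Theorem~\ref{thm:PutThm}, say $\lmd_\af f \pm x^\af \in Q_{k_\af}(g)+I_{2k_\af}(h)$. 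For $k \geq \max_\af k_\af$, applying \reff{<p,z>=0:qmod} to $z^{(k)}$ (which extends to some $z \in \Phi_k(g)\cap E_k(h)$) gives $|z^{(k)}_\af| \leq \lmd_\af \langle f, z^{(k)}\rangle = \lmd_\af$. So the tail of $\{z^{(k)}\}$ lies in a fixed bounded box.

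Next I would pass to a convergent subsequence $z^{(k)} \to z^* \in \re^{\A}$. The second key step is to verify $z^* \in \mathscr{R}_{\A}(K)$. For this I invoke Proposition~\ref{thm:cap:SkA}: I claim $z^* \in \mathscr{S}_{\A}^m(K)$ for every fixed $m$. Indeed, fix $m$; for all $k \geq m$ we have $z^{(k)} \in \mathscr{S}_{\A}^k(K) \subseteq \mathscr{S}_{\A}^m(K)$ by the nesting \reff{RAK:SDr:hier}, and $\mathscr{S}_{\A}^m(K)$ is closed (it is the image under the linear projection $z \mapsto z|_{\A}$ of the closed set $\Phi_m(g)\cap E_m(h)$ — here one should note this projection of a closed set is closed because, again by archimedeanness, $\Phi_m(g)\cap E_m(h)$ is bounded, hence compact). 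Therefore $z^* \in \mathscr{S}_{\A}^m(K)$. Since this holds for all $m$, Proposition~\ref{thm:cap:SkA} (whose hypotheses are exactly those assumed here, noting $\re[x]_{\A}$ is $K$-full because it contains $f$) gives $z^* \in \mathscr{R}_{\A}(K)$. Moreover $\langle f, z^*\rangle = \lim_k \langle f, z^{(k)}\rangle = 1$, so in fact $z^* \in \mathscr{R}_{\A}(K,f)$.

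Finally, $\|z^{(k)} - z^*\|_2 \to 0$ while $z^* \in \mathscr{R}_{\A}(K,f)$ forces $\min_{y\in \mathscr{R}_{\A}(K,f)}\|z^{(k)}-y\|_2 \to 0$, contradicting the lower bound $\dt$. Hence \reff{mom:dist->0} holds.

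The step I expect to be the main obstacle is the boundedness of $\{z^{(k)}\}$ — specifically, getting a clean uniform bound on every coordinate $z^{(k)}_\af$ from the single normalization $\langle f, z^{(k)}\rangle = 1$. The trick is to dominate each monomial $\pm x^\af$ by a multiple of $f$ on $K$ and push this through Putinar's theorem into the degree-$k$ quadratic module so that the duality \reff{<p,z>=0:qmod} applies to the SDP-feasible extension of $z^{(k)}$; one must be slightly careful that the required order $k_\af$ does not depend on $k$, which it does not since $f$ and the $\lmd_\af$ are fixed once and for all. A secondary technical point worth stating explicitly is that each $\mathscr{S}_{\A}^m(K)$ is closed, which again relies on the archimedean hypothesis to make $\Phi_m(g)\cap E_m(h)$ compact.
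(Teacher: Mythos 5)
Your overall strategy (uniform boundedness of the normalized slices, extract a convergent subsequence, show the limit lies in $\mathscr{R}_{\A}(K,f)$, contradict the distance lower bound) matches the paper's, and your boundedness step is correct: dominating each $\pm x^\af$ by $\lmd_\af f$ on $K$ and pushing this through Putinar's theorem and the duality \reff{<p,z>=0:qmod} is a legitimate variant of the paper's argument (which instead bounds $z_{\mathbf{0}}$ via $f-\eps$ and then uses $R\pm x^\af$).

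There is, however, a genuine gap in your second step. You justify the closedness of $\mathscr{S}_{\A}^m(K)$ by asserting that $\Phi_m(g)\cap E_m(h)$ is bounded, hence compact, ``by archimedeanness.'' This is false: $\Phi_m(g)\cap E_m(h)$ is a convex cone, closed under multiplication by $t\geq 0$, and since $K\neq\emptyset$ it contains $t\,[u]_{2m}$ for every $u\in K$ and $t\geq 0$, so it is unbounded (archimedeanness makes $K$ compact, not this cone). Without compactness the projection of a closed convex cone onto the $\A$-coordinates need not be closed, so the claim $z^*\in\mathscr{S}_{\A}^m(K)$ is unsupported as written. One can repair this, but it takes real work: one must instead bound the normalized slice $\{w\in\Phi_m(g)\cap E_m(h):\langle f,w\rangle=1\}$ (or a suitable truncation of it) using $f-\eps\in Q_m(g)+I_{2m}(h)$ together with the archimedean certificate and a moment-matrix trace estimate, and only then can one pass to the limit inside $\Phi_m(g)\cap E_m(h)$. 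The paper sidesteps this entirely by a duality argument: if the limit $\hat{y}$ were not in $\mathscr{R}_{\A}(K)=\mathscr{P}_{\A}(K)^*$, there would be $p_0\in\mathscr{P}_{\A}(K)$ with $\langle p_0,\hat{y}\rangle<0$; perturbing to $p_1=p_0+\eps_0 f>0$ on $K$ while keeping $\langle p_1,\hat{y}\rangle<0$ and applying Theorem~\ref{thm:PutThm} gives $p_1\in Q_{N_3}(g)+I_{2N_3}(h)$, whence $\langle p_1,y^{k_i}\rangle\geq 0$ for large $k_i$, contradicting the strict negativity in the limit. I recommend you replace your closedness claim with this separation/Putinar argument, or else supply the missing compactness of the normalized slice.
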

\begin{proof}
Since $f>0$ on $K$, there exists $\eps >0$ with $f-\eps >0$ on $K$.
Since $I(h)+Q(g)$ is archimedean, by Theorem~\ref{thm:PutThm},
we have $f -\eps \in Q_{N_1}(g)+I_{2N_1}(h)$ for some $N_1$.
Similarly, there exist $R>0$ and $N_2\geq N_1$
such that for all $\af \in \A$
\[
R \pm  x^{\af} \in Q_{N_2}(g) + I_{2N_2}(h).
\]
For all $y\in \mathscr{S}_{\A}^k(K,f)$ with $k \geq N_2$,
there exists $z \in \Phi_k(g) \cap E_k(h)$ such that $z|_{\A} = y$.
Since $\langle f-\eps, z \rangle \geq 0$, we get
$
\eps z_{\mathbf{0}} \leq  \langle  f, z \rangle = \langle  f, y \rangle = 1
$
and
\[
0 \leq \langle R \pm  x^{\af}, z \rangle = R z_{\mathbf{0}} \pm y_\af.
\]
(Here $\mathbf{0}$ denotes the zero vector in $\N^n$.)
This implies that $| y_{\af} | \leq R/\eps$ for all $\af \in \A$.
Hence, the sets $\mathscr{S}_{\A}^k(K,f)$, with $k \geq N_2$,
are uniformly bounded.

By \reff{RAK:SDr:hier}, we know
$dist \left(\mathscr{S}_{\A}^k(K,f), \mathscr{R}_{\A}(K,f) \right)$
is monotonically decreasing.
Suppose otherwise \reff{mom:dist->0} is not true.
Then there exists $\tau$ such that
\[
dist \left(\mathscr{S}_{\A}^k(K,f), \mathscr{R}_{\A}(K,f) \right) \geq \tau > 0
\]
for all $k$. We can select
$y^{k} \in \mathscr{S}_{\A}^{k}(K,f)$ for each $k$ such tat
\[
dist \left( y^{k}, \mathscr{R}_{\A}(K,f) \right) \geq \tau/2.
\]
The sequence $\{y^{k}\}$ is bounded, because
the sets $\mathscr{S}_{\A}^k(K,f)$ ($k \geq N_2$) are uniformly bounded,
as shown in the above. It has a convergent subsequence, say,
$y^{k_i} \to \hat{y}$ as $i\to \infty$.
Clearly, $\langle f, \hat{y} \rangle =1$ and
$dist \left( \hat{y}, \mathscr{R}_{\A}(K,f) \right) >0$.
So, $\hat{y} \not\in \mathscr{R}_{\A}(K,f)$.
This implies $\hat{y} \not\in \mathscr{R}_{\A}(K) = \mathscr{P}_{\A}(K)^*$,
by \reff{Pdual=R}. So, there exists $p_0\in \mathscr{P}_{\A}(K)$ such that
$
\langle p_0, \hat{y} \rangle < 0.
$
For a small $\eps_0>0$, we have
\[
p_1:=p_0 + \eps_0 f> 0 \quad \mbox{on} \,\, K, \qquad
\langle p_1, \hat{y} \rangle < 0.
\]
By Theorem~\ref{thm:PutThm},
we have $p_1 \in Q_{N_3}(g)+I_{2N_3}(h)$ for some $N_3$.
So, $\langle p_1, y^{k_i} \rangle \geq 0$ for all $k_i \geq N_3$.
This results in
\[
\langle p_1, \hat{y} \rangle = \lim_{ i \to \infty}
\langle p_1, y^{k_i} \rangle \geq 0,
\]
which is a contradiction. Thus, \reff{mom:dist->0} must be true.
\qed
\end{proof}

Second, we consider semidefinite relaxations for the cone
$\mathscr{R}_{\A}(K)$. Denote
\be \label{Qkrlx}
\mathscr{Q}_{\A}^k(K) = \{ p \in \re[x]_{\A} : \, p \in Q_k(g)+ E_{2k}(h) \}.
\ee
Clearly, $\mathscr{Q}_{\A}^k(K) \subseteq \mathscr{P}_{\A}(K)$ for all $k$.
Suppose $K$ is compact and $\re[x]_\A$ is $K$-full.
If $p$ is in the interior of $\mathscr{P}_{\A}(K)$,
then $p>0$ on $K$ by Lemma~\ref{lm:int-ARz},
and $p \in \mathscr{Q}_{\A}^k(K)$ for some $k$
by Theorem~\ref{thm:PutThm}, if $I(h)+Q(g)$ is archimedean.
So, we get the following proposition.

\begin{prop} \label{pr:SDr:PAK}
Let $K \ne \emptyset$ be as in \reff{def:K} and $\A \subseteq \N^n$ be finite.
Suppose $\re[x]_\A$ is $K$-full and $I(h)+Q(g)$ is archimedean. Then, we have
{\small
\be  \label{PAK:SDr}
int\left( \mathscr{P}_{\A}(K) \right) \subseteq
\bigcup_{k=1}^{\infty} \mathscr{Q}_{\A}^k(K)
\subseteq \mathscr{P}_{\A}(K).
\ee
}
\end{prop}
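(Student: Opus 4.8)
The plan is to establish the two inclusions in \reff{PAK:SDr} independently; each is a one-step consequence of material already in place, so the argument will be short. I read the right-hand object in \reff{Qkrlx} as membership in the truncated ideal-plus-quadratic-module $Q_k(g)+I_{2k}(h)$, with $I_{2k}(h)$ as in \reff{t-ideal:h}.

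For the right-hand inclusion $\bigcup_{k\ge 1}\mathscr{Q}_{\A}^k(K)\subseteq \mathscr{P}_{\A}(K)$ I would fix $k$ and take any $p\in\mathscr{Q}_{\A}^k(K)$. By \reff{Qkrlx} we have $p\in\re[x]_{\A}$ and $p\in Q_k(g)+I_{2k}(h)\subseteq Q(g)+I(h)$. As recorded in Subsection~\ref{sec:2.2}, every element of $I(h)+Q(g)$ is nonnegative on $K$, so $p|_K\ge 0$; combined with $p\in\re[x]_{\A}$ this is exactly the statement $p\in\mathscr{P}_{\A}(K)$. This half uses neither $K$-fullness nor the archimedean hypothesis.

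For the left-hand inclusion $int(\mathscr{P}_{\A}(K))\subseteq\bigcup_{k\ge 1}\mathscr{Q}_{\A}^k(K)$ I would first observe that the archimedean assumption forces $K$ to be compact: from $R-\|x\|_2^2\in I(h)+Q(g)$ we get $\|x\|_2^2\le R$ on $K$, and $K$ is closed (it is cut out by the equalities $h_i=0$ and the non-strict inequalities $g_j\ge 0$), so $K$ is bounded and closed, and Lemma~\ref{lm:int-ARz} applies. Now let $p\in int(\mathscr{P}_{\A}(K))$. Lemma~\ref{lm:int-ARz}(i) gives $p>0$ on $K$. Since $p$ is a polynomial positive on $K$ and $I(h)+Q(g)$ is archimedean, Putinar's Positivstellensatz (Theorem~\ref{thm:PutThm}) yields $p\in I(h)+Q(g)$. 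Because $I(h)=\bigcup_k I_{2k}(h)$ and $Q(g)=\bigcup_k Q_k(g)$ are increasing unions, there is a $k$ with $p\in I_{2k}(h)+Q_k(g)$; as $p\in\re[x]_{\A}$, this says $p\in\mathscr{Q}_{\A}^k(K)$, hence $p\in\bigcup_{k}\mathscr{Q}_{\A}^k(K)$.

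I do not expect any step to be a genuine obstacle; the only point worth flagging is why the chain \reff{PAK:SDr} cannot be strengthened to equalities. Putinar's theorem converts \emph{strict} positivity on $K$ into membership in a truncated quadratic module, not mere nonnegativity, so only the interior of $\mathscr{P}_{\A}(K)$ is captured on the left: a polynomial in $\re[x]_{\A}$ that is nonnegative on $K$ but has a zero on $K$ lies in $\mathscr{P}_{\A}(K)$ yet in general in no $\mathscr{Q}_{\A}^k(K)$. The $K$-fullness hypothesis plays no role in the computation itself; its purpose is external — by Proposition~\ref{lm:pro+dual} it guarantees $int(\mathscr{P}_{\A}(K))\ne\emptyset$, so that the left inclusion is not vacuous.
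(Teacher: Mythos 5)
Your proof is correct and follows essentially the same route as the paper: the right inclusion from the fact that elements of $Q_k(g)+I_{2k}(h)$ are nonnegative on $K$, and the left inclusion by combining Lemma~\ref{lm:int-ARz}(i) (interior points are strictly positive on the compact set $K$) with Putinar's Positivstellensatz. Your added remarks --- that archimedeanness forces compactness, that you read $E_{2k}(h)$ in \reff{Qkrlx} as $I_{2k}(h)$, and that $K$-fullness only serves to make the interior nonempty --- are all consistent with the paper's intent.
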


The second containment inequality in
\reff{PAK:SDr} generally cannot be changed to an equality.
For instance, when $K = B(0,1)$ and $\A = \N^3_6$, the Motzkin polynomial
$x_1^2x_2^2(x_1^2+x_2^2-3x_3^2)+x_3^6 \in \mathscr{P}_{\A}(K)$
but it does not belong to $\mathscr{Q}_{\A}^k(K)$ for any $k$
(cf.~\cite[Example~5.3]{Nie-jac}).

\subsection{Checking memberships}

First, checking whether $y \in \mathscr{R}_{\A}(K)$ or not
can be done by Algorithm~4.2 of \cite{Nie-ATKMP}.
It is based on solving a hierarchy of
semidefinite relaxations about moment sequences.
That algorithm has the following properties:
i) If $\re[x]_{\A}$ is $K$-full and $y \in \re^{\A}$ admits no $K$-measures,
then a semidefinite relaxation is infeasible.
This gives a certificate for the non-membership $y \not\in \mathscr{R}_{\A}(K)$.
ii) If $y$ admits a $K$-measure, then
we can asymptotically get a finitely atomic $K$-representing measure for $y$,
which certifies the membership $y \in \mathscr{R}_{\A}(K)$;
moreover, under some general conditions,
we can get such a measure within finitely many steps.

\bigskip
Second, we discuss how to check memberships in the cone $\mathscr{P}_{\A}(K)$.
Clearly, a polynomial $f \in \re[x]_{\A}$ belongs to $\mathscr{P}_{\A}(K)$
if and only if its minimum $f_{min}$ over $K$ is nonnegative.
A standard approach for computing $f_{min}$ is to
apply Lasserre's hierarchy of semidefinite relaxations ($k=1,2,\cdots$):
\be \label{las:hrky}
f_k = \max \quad \gamma \qquad s.t. \qquad f-\gamma \in I_{2k}(h) + Q_k(g).
\ee
Clearly, if $f_k \geq 0$ for some $k$, then $f \in \mathscr{P}_{\A}(K)$.
Suppose $I(h)+Q(g)$ is archimedean. For all $f \in  int(\mathscr{P}_{\A}(K))$,
we have $f_k > 0$ for some $k$, by Proposition~\ref{pr:SDr:PAK}.
For $f$ lying generically on the boundary of $\mathscr{P}_{\A}(K)$
(e.g., some standard optimality conditions hold),
we have  $f_k \geq 0$ for some $k$ (cf.~\cite{Nie-opcd}).
For the remaining non-generic cases,
it is possible that $f_k < f_{min}$ for all $k$
(cf.~\cite[Examples~5.3,\,5.6]{Nie-jac}).

Another method for computing $f_{min}$
is the Jacobian SDP relaxation in \cite{Nie-jac}.
Its basic idea is to add new polynomial equalities,
by using the Jacobian of polynomials $f, h_i,g_j$.
Suppose $\varphi := (\varphi_1,\ldots,\varphi_L)=0$ is added
(cf.~\cite[Section~2]{Nie-jac}).
Under some generic conditions on $K$ but not on $f$
(cf.~Assumption~2.2 of \cite{Nie-jac}),
$f_{min}$ equals the optimal value of
\be \label{pop:Jac-K}
\min \quad f(x) \quad
\mbox{s.t.}  \quad \varphi(x) = 0, \,  h(x) = 0, \, g(x) \geq 0.
\ee
This leads to the hierarchy of stronger semidefinite relaxations ($k=1,2,\cdots$):
\be  \label{sos:jac}
f_k^{jac} \, := \, \max \, \gamma \quad
\mbox{ s.t. } \quad f - \gamma  \in I_{2k}(h)+I_{2k}(\varphi) + Q_k(g).
\ee
An advantage of this approach is that
$\{f_k^{jac}\}$ always have finite convergence to $f_{min}$
(cf. \cite[Section~4]{Nie-jac}).
So, we can check whether $f \in \mathscr{P}_{\A}(K)$ or not
by solving finitely many semidefinite relaxations.

\section{Linear optimization problems}
\label{sec:liopt}
\setcounter{equation}{0}

Let $K$ be as in \reff{def:K} and $\A \subseteq \N^n$ be finite.
Given $a_1, \ldots, a_m,c \in \re[x]_{\A}$ and $b\in \re^m$,
we consider the linear optimization problem
\be  \label{opt:RAK}
\left\{ \baray{rl}
c^{min} := \min &  \langle c, y \rangle  \\
s.t. &  \langle a_i, y \rangle = b_i \, (i = 1,\ldots,  m),
\quad y \in \mathscr{R}_{\A}(K).
\earay \right.
\ee
The dual problem of \reff{opt:RAK} is
\be  \label{maxb:PAK}
\left\{ \baray{rl}
b^{\max}:=\max  &  b^T\lmd  \\
s.t. &  c(\lmd):= c - \sum_{i=1}^m \lmd_i a_i \in \mathscr{P}_{\A}(K).
\earay \right.
\ee
The cones $\mathscr{R}_{\A}(K)$ and $\mathscr{P}_{\A}(K)$ are hard to describe,
but they can be approximated as close as possible
by the semidefinite relaxations $\mathscr{S}_{\A}^k(K)$ in \reff{S:A^k(K)}
and $\mathscr{Q}_{\A}^k(K)$ in \reff{Qkrlx} respectively
(cf. Propositions \ref{thm:cap:SkA}, \ref{pr:SDr:PAK}).
If we relax $\mathscr{R}_{\A}(K)$ by $\mathscr{S}_{\A}^k(K)$,
then \reff{opt:RAK} is relaxed to
\be \label{mincy:k-MOM}
\left\{ \baray{rl}
c^k:= \underset{y, w}{\min} &   \langle c, y \rangle \\
s.t. &  \langle a_i, y \rangle = b_i, \, i =1,\ldots, m \\
 &   y=w|_{\A}, \,  w \in \Phi_k(g) \cap E_k(h).
\earay \right.
\ee
If $y$ is feasible in \reff{mincy:k-MOM}, then $y \in \mathscr{S}_{\A}^k(K)$.
The integer $k$ in \reff{mincy:k-MOM} is called a relaxation order.
The dual problem of \reff{mincy:k-MOM} is
\be  \label{mxblm:k-SOS}
\left\{ \baray{rl}
b^k:= \underset{\lmd =(\lmd_1, \ldots, \lmd_m) }{\max} &  b^T\lmd \\
s.t. \quad \, &  c(\lmd)  \in  Q_k(g) + I_{2k}(h).
\earay \right.
\ee
Clearly, $\lmd$ is feasible in \reff{mxblm:k-SOS}
if and only if $c(\lmd) \in \mathscr{Q}_{\A}^k(K)$.
Recall the notation $\Phi_k(g)$,
$E_k(h)$, $Q_k(g)$, $I_{2k}(h)$ from Subsection~\ref{sec:2.2}.

Clearly, we have $c^k \leq c^{min}$ and $b^k \leq b^{\max}$ for all $k$.
Let $(y^{*,k}, w^{*,k})$ be a minimizer of \reff{mincy:k-MOM},
and let $\lmd^{*,k}$ be a maximizer of \reff{mxblm:k-SOS}.
If $y^{*,k} \in \mathscr{R}_{\A}(K)$, then $c^k= c^{min}$
and $y^{*,k}$ is a minimizer of \reff{opt:RAK},
i.e., the relaxation \reff{mincy:k-MOM} is exact for solving \reff{opt:RAK}.
In such case, if $b^k = c^k$ also holds, then $b^k = b^{\max}$ and
$\lmd^{*,k}$ is a maximizer of \reff{maxb:PAK}.
If the relaxation \reff{mincy:k-MOM} is infeasible,
then \reff{opt:RAK} must also be infeasible.
Combining the above, we get the following algorithm.

\begin{alg} \label{alg:momopt}
A semidefinite algorithm for solving \reff{opt:RAK}-\reff{maxb:PAK}. \\
{\bf Input:}\, $c,a_1, \ldots, a_m \in \re[x]_{\A}$,
$b \in \re^m$ and $K$ as in \reff{def:K}. \\
\noindent
{\bf Output:} A minimizer $y^*$ of \reff{opt:RAK}
and a maximizer $\lmd^*$ of \reff{maxb:PAK},
or an answer that \reff{opt:RAK} is infeasible.

\noindent
{\bf Procedure:} \quad
\bdes

\item [Step 0] Let $k = \lceil \deg(\A)/2 \rceil$.

\item [Step 1] Solve the primal-dual pair \reff{mincy:k-MOM}-\reff{mxblm:k-SOS}.
If \reff{mincy:k-MOM} is infeasible,
stop and output that \reff{opt:RAK} is infeasible;
otherwise, compute an optimal pair $(y^{*,k},w^{*,k})$ for \reff{mincy:k-MOM}
and a maximizer $\lmd^{*,k}$ for \reff{mxblm:k-SOS}.

\item [Step 2] If $y^{*,k} \in \mathscr{R}_{\A}(K)$,
then $y^{*,k}$ is a minimizer of \reff{opt:RAK};
if in addition $b^k = c^k$,
then $\lmd^{*,k}$ is a maximizer of \reff{maxb:PAK};
stop and output $y^*=y^{*,k}$, $\lmd^*=\lmd^{*,k}$.
Otherwise, let $k:=k+1$ and go to Step~1.

\edes

\end{alg}

\begin{remark}
Checking if $y^{*,k} \in \mathscr{R}_{\A}(K)$ or not
is a stopping criterion for Algorithm~\ref{alg:momopt}.
If there exists $t \geq \deg(\A)/2$ such that
$w^{*,k}|_{2t}$ is flat, then $y^{*,k} \in \mathscr{R}_{\A}(K)$.
This gives a convenient way to terminate the algorithm.
It is possible that $y^{*,k}$ belongs to $\mathscr{R}_{\A}(K)$
while $w^{*,k}|_{2t}$ is not flat for all $t$
(cf.~Example~\ref{exm:4.7}).
In such case, we can apply Algorithm~4.2 of \cite{Nie-ATKMP}
to check if $y^{*,k} \in \mathscr{R}_{\A}(K)$ or not.
\end{remark}

Feasibility and infeasibility issues of
\reff{opt:RAK}-\reff{maxb:PAK} are more delicate.
They will be studied separately in Section~\ref{sec:feas}.
In Section~4.1, we prove the asymptotic and finite convergence
of Algorithm~\ref{alg:momopt}.
In Section~4.2, we present some examples.

\subsection{Convergence analysis}

First, we prove the asymptotic convergence of Algorithm~\ref{alg:momopt}.
%
%When $\A = \N_d^n$ and one of $c,a_1,\ldots, a_m$ is positive on $K$,
%Lasserre \cite[Theorem~1]{Las08} showed that the optimal values
%$c^k \to c^{min}$ as $k \to \infty$.
%This is also true for general $\A$.
%Indeed, we can prove the stronger result that
%the sequence $\{y^{*,k}\}$ produced by Algorithm~\ref{alg:momopt}
%converges to the cone $\mathscr{R}_{\A}(K)$,
%under more general assumptions.
%

\begin{theorem}  \label{thm:alg:asymp}
Let $K$ be as in \reff{def:K} and $\A \subseteq \N^n$ be finite.
Suppose \reff{opt:RAK} is feasible, \reff{maxb:PAK} has an interior point,
$\re[x]_{\A}$ is $K$-full, and $Q(g)+I(h)$ is archimedean. Then, we have:
\bit

\item [(i)] For all $k$ sufficiently large,
\reff{mxblm:k-SOS} has an interior point and
\reff{mincy:k-MOM} has a minimizing pair $(y^{*,k},w^{*,k})$.

\item [(ii)] The sequence $\{y^{*,k}\}$
is bounded, and each of its accumulation points is
a minimizer of \reff{opt:RAK}.

\item [(iii)] The sequence $\{b^k\}$ converges to
the maximum $b^{\max}$ of \reff{maxb:PAK}.

\eit
\end{theorem}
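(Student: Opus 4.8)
The plan is to establish the three parts in order, using the duality relation $\mathscr{R}_{\A}(K)=\mathscr{P}_{\A}(K)^*$ from Proposition~\ref{lm:pro+dual}, the convergence of the semidefinite hierarchies from Propositions~\ref{thm:cap:SkA}--\ref{pr:SDr:PAK}, and standard semidefinite duality. For part (i), I would start from the assumption that \reff{maxb:PAK} has an interior point, i.e.\ there exists $\hat\lmd$ with $c(\hat\lmd)$ in the interior of $\mathscr{P}_{\A}(K)$; by Lemma~\ref{lm:int-ARz}(i) this means $c(\hat\lmd)>0$ on $K$, so by Theorem~\ref{thm:PutThm} (Putinar) we get $c(\hat\lmd)\in Q_{k_0}(g)+I_{2k_0}(h)$ for some $k_0$. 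In fact I would argue that $\hat\lmd$ is then an interior point of the feasible set of \reff{mxblm:k-SOS} for all $k\ge k_0$: perturbing $\hat\lmd$ slightly keeps $c(\lmd)$ positive on $K$, and then a uniform Putinar degree bound (Schm\"udgen/Putinar effective bounds, or a compactness argument on the compact set of nearby positive $c(\lmd)$) gives membership in $Q_k(g)+I_{2k}(h)$ with $k$ independent of the perturbation. Strict feasibility of the dual \reff{mxblm:k-SOS} plus feasibility of the primal \reff{mincy:k-MOM} (which holds since \reff{opt:RAK} is feasible and $\mathscr{R}_{\A}(K)\subseteq\mathscr{S}_{\A}^k(K)$) then yields, by conic duality, that the primal optimum $c^k$ is attained and $c^k=b^k$; attainment of a minimizing pair $(y^{*,k},w^{*,k})$ also needs the feasible set of \reff{mincy:k-MOM} to have bounded intersection with the relevant level set, which I address next.

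For part (ii), the key is boundedness of $\{y^{*,k}\}$, and more: boundedness of the whole feasible region of \reff{mincy:k-MOM} (for $k$ large) in the relevant coordinates. I would use the interior-point hypothesis again: since $c(\hat\lmd)>0$ on $K$, there is $\eps>0$ with $c(\hat\lmd)-\eps>0$ on $K$, and by Putinar there are $R>0$ and $N$ with $c(\hat\lmd)-\eps\in Q_N(g)+I_{2N}(h)$ and $R\pm x^\af\in Q_N(g)+I_{2N}(h)$ for every $\af\in\A$ (this is exactly the uniform-boundedness device already used in the proof of Proposition~\ref{pr:dist>>0}). For any feasible $(y,w)$ with $k\ge N$, pairing $w$ with these elements of the quadratic module and using $\langle a_i,y\rangle=b_i$ gives $\eps\, w_{\mathbf 0}\le \langle c(\hat\lmd),w\rangle=\langle c,y\rangle-b^T\hat\lmd = \langle c,y\rangle - \text{const}$, hence once we know $\langle c, y\rangle$ is controlled (it equals $c^k\le c^{min}$ at the optimum, but I need an upper bound too — I would get this from any fixed feasible point of \reff{opt:RAK}) we obtain $w_{\mathbf 0}\le$ const and then $|y_\af|=|w_\af|\le R\,w_{\mathbf 0}\le$ const for all $\af\in\A$. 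So $\{y^{*,k}\}$ is bounded. Then any accumulation point $\hat y=\lim_i y^{*,k_i}$ satisfies the linear constraints $\langle a_i,\hat y\rangle=b_i$ by continuity, and $\hat y\in\bigcap_k\mathscr{S}_{\A}^{k}(K)=\mathscr{R}_{\A}(K)$ by Proposition~\ref{thm:cap:SkA} (using that the subsequence is eventually inside every $\mathscr{S}_{\A}^k(K)$, by the nesting \reff{RAK:SDr:hier}); so $\hat y$ is feasible for \reff{opt:RAK}. Since $\langle c,\hat y\rangle=\lim_i c^{k_i}\le c^{min}$ and $\hat y$ is feasible, $\langle c,\hat y\rangle=c^{min}$, i.e.\ $\hat y$ is a minimizer.

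For part (iii), I would combine weak duality $b^k\le b^{\max}$ with the zero-gap facts: for $k$ large, $b^k=c^k$ (from part (i)), and $c^k\to c^{min}$ (from part (ii), since $c^k=\langle c,y^{*,k}\rangle$ and every accumulation point of $\{y^{*,k}\}$ has $c$-value $c^{min}$, while $\{c^k\}$ is monotone nondecreasing and bounded above by $c^{min}$, hence convergent to $c^{min}$). Finally $c^{min}=b^{\max}$: weak duality gives $b^{\max}\le c^{min}$, and the reverse follows because $b^{\max}\ge b^k\to c^{min}$. Hence $b^k\to b^{\max}=c^{min}$. The main obstacle I anticipate is part (i), specifically getting a \emph{uniform} relaxation order $k_0$ so that $\hat\lmd$ (and a neighborhood of it) is strictly feasible for \reff{mxblm:k-SOS} — this is where one needs either effective Positivstellensatz degree bounds or a careful compactness argument over the set of dual-feasible $\lmd$ near $\hat\lmd$; a secondary technical point is justifying attainment of the primal optimum in \reff{mincy:k-MOM}, which I handle via the boundedness estimate of part (ii) together with closedness of the spectrahedron, or alternatively directly from strong conic duality once dual strict feasibility is in hand.
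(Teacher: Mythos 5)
Your parts (i) and (iii) follow essentially the paper's route: the uniform Putinar degree bound over a neighborhood of the dual interior point (the paper cites Theorem~6 of \cite{NS07} for exactly the uniformity you flag as the main obstacle), then conic duality for attainment of $(y^{*,k},w^{*,k})$ and $c^k=b^k$, and $b^k\to b^{\max}$ via $c^{\min}=b^{\max}$ (the paper's (iii) instead perturbs a near-optimal $\lmd^{\eps}$ toward $\lmd^0$ and applies Putinar directly, which has the advantage of not depending on (ii)). Your boundedness argument for $\{y^{*,k}\}$ via $\eps\, w_{\mathbf 0}\le \langle c,y\rangle-b^T\hat\lmd$ and $R\pm x^{\af}\in Q_N(g)+I_{2N}(h)$ is correct and a bit leaner than the paper's.

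The genuine gap is in the second half of (ii). You pass from ``$y^{*,k_i}\in\mathscr{S}_{\A}^{k}(K)$ for all large $i$'' to ``$\hat y\in\mathscr{S}_{\A}^{k}(K)$,'' which tacitly assumes each $\mathscr{S}_{\A}^{k}(K)$ is closed. But $\mathscr{S}_{\A}^{k}(K)$ is the image of the unbounded spectrahedron $\Phi_k(g)\cap E_k(h)$ under the coordinate projection $z\mapsto z|_{\A}$, and projections of closed unbounded convex sets need not be closed; the paper never asserts closedness of these sets, and your estimates only bound $w_{\mathbf 0}$ and the $\A$-indexed entries of the lifting variable, which is not enough to extract a convergent lift witnessing $\hat y\in\mathscr{S}_{\A}^{k}(K)$. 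To close this you would need to bound \emph{all} entries of a fixed-order truncation $w^{*,k_i}|_{2k}$ uniformly in $i$ — this is exactly what the paper's moment-matrix trace estimate
\[
\|z^{k}\|_2 \le \mbox{Trace}\big(M_{k-k_1}(z^{k})\big) \le (1+\rho+\cdots+\rho^{k-k_1})\,T_1
\]
delivers — and then either argue the limit of the lifts lies in the closed set $\Phi_k(g)\cap E_k(h)$, or do what the paper does: extend the $z^k$ to the full moment space, invoke Alaoglu's theorem to get a weak-$\ast$ limit $z^*$ whose localizing matrices of all orders are positive semidefinite, and apply Putinar's solution of the (non-truncated) $K$-moment problem to conclude that $z^*$, hence $\hat y=z^*|_{\A}$, admits a $K$-measure. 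Your appeal to Proposition~\ref{thm:cap:SkA} alone does not substitute for this step. Since your (iii) is routed through $c^k\to c^{\min}$, it inherits the same dependence.
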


\begin{remark}
For the classical case $\A = \N_d^n$,
if one of $c,a_1,\ldots, a_m$ is positive on $K$,
Lasserre \cite[Theorem~1]{Las08} proved
$c^k \to c^{min}$ as $k \to \infty$.
This can also be implied by the item (ii) of Theorem~\ref{thm:alg:asymp},
because $c^k = \langle c, y^{*,k} \rangle$.
The conclusion that each accumulation point of $\{y^{*,k}\}$
is a minimizer of \reff{opt:RAK} is a stronger property.
Moreover, the assumption that \reff{maxb:PAK} has an interior point
is weaker than that one of $c,a_1,\ldots, a_m$ is positive on $K$.
Theorem~\ref{thm:alg:asymp} discusses
more general cases of $\A$.
\end{remark}

\begin{proof}[of Theorem~\ref{thm:alg:asymp}]\,
(i) Let $\lmd^0$ be an interior point of \reff{maxb:PAK}.
Then $c(\lmd^0) = c - \sum_{i=1}^m \lmd_i^0 a_i >0$ on $K$, by Lemma~\ref{lm:int-ARz}.
The archimedeanness of $I(h)+Q(g)$ implies that $K$ is compact.
So, there exist $\eps_0>0$ and $\theta > 0$ such that
\[
c(\lmd) - \eps_0 > \eps_0 \quad \forall \,  \lmd \in B(\lmd^0,\theta).
\]
By Theorem~6 of \cite{NS07}, there exists $N_0>0$ such that
\[
c(\lmd) - \eps_0 \in I_{2N_0}(h) + Q_{N_0}(g)
\quad \forall \, \lmd \in B(\lmd^0,\theta).
\]
So, \reff{mxblm:k-SOS} has an interior point
for all $k\geq N_0$, and the strong duality holds between
\reff{mincy:k-MOM} and \reff{mxblm:k-SOS}.
Since \reff{opt:RAK} is feasible, the relaxation \reff{mincy:k-MOM}
is also feasible and has a minimizing pair $(y^{*,k}, w^{*,k})$
(cf. \cite[Theorem~2.4.I]{BTN}).

(ii) First, we show that $\{y^{*,k}\}$ is a bounded sequence.
Let $c(\lmd^0)$ and $\eps_0$ be as in the proof of (i).
The set $I_{2N_0}(h) + Q_{N_0}(g)$ is dual to
$E_{2N_0}(h)\cap \Phi_{N_0}(g)$.
For all $k\geq N_0$, we have
$w^{*,k} \in E_{N_0}(h) \cap \Phi_{N_0}(g)$ and
\[
0 \leq \langle c(\lmd^0) - \eps_0, w^{*,k} \rangle =
 \langle c(\lmd^0) , w^{*,k} \rangle - \eps_0 \langle 1, w^{*,k} \rangle,
\]
\[
 \langle c(\lmd^0) , w^{*,k} \rangle  =  \langle c , w^{*,k} \rangle
 - \sum_{i=1}^m \lmd_i^0 \langle a_i , y^{*,k} \rangle
 = \langle c , w^{*,k} \rangle - b^T \lmd^0.
\]
Since $\langle c , w^{*,k} \rangle \leq c^{min}$, it holds that
\[
 \langle c(\lmd^0) , w^{*,k} \rangle  \leq T_0:= c^{min} - b^T \lmd^0.
\]
Combining the above, we get (denote by $\mathbf{0}$ the zero vector in $\N^n$)
\[
0 \leq \langle c(\lmd^0) - \eps_0, w^{*,k} \rangle
\leq  T_0 - \eps_0 (w^{*,k})_{\mathbf{0}},
\]
\[
(w^{*,k})_{\mathbf{0}} \leq T_1:=T_0/\eps_0.
\]
Since $I(h)+Q(g)$ is archimedean,
there exist $\rho>0$ and $k_1 \in \N$ such that
\[
\rho - \|x\|_2^2  \in I_{2k_1}(h) + Q_{k_1}(g).
\]
So, for all $k\geq k_1$, we get
{\small
\[
0 \leq \langle \rho - \|x\|_2^2, w^{*,k} \rangle
%
%= \rho \langle 1, w^{*,k} \rangle  -  \langle \|x\|_2^2, w^{*,k} \rangle
%
= \rho (w^{*,k})_{\mathbf{0}} - \sum_{|\af|=1} (w^{*,k})_{2\af}, \quad
\sum_{|\af|=1} (w^{*,k})_{2\af} \leq  \rho T_1.
\]
}
For each $t = 1, \ldots, k-k_1$, we have
\[
\|x\|_2^{2t-2}(\rho - \|x\|_2^2)  \in I_{2k}(h) + Q_{k}(g).
\]
The membership $w^{*,k} \in \Phi_k(g) \cap I_k(h)$ implies that
\[
\rho \langle \|x\|_2^{2t-2}, w^{*,k} \rangle - \langle \|x\|_2^{2t}, w^{*,k} \rangle
\geq 0, \quad t = 1, \ldots, k-k_1.
\]
The above then implies that
\[
\langle \|x\|_2^{2t}, w^{*,k} \rangle  \leq \rho^{t} T_1,
\quad t = 1, \ldots, k-k_1.
\]
Let $z^k := w^{*,k}|_{2k-2k_1}$, then
the moment matrix $M_{k-k_1}(z^k) \succeq 0$ and
{\small
\[
\|z^k\|_2 \leq \|M_{k-k_1}(z^k)\|_F \leq Trace(M_{k-k_1}(z^k)) =
\sum_{i=0}^{k-k_1} \sum_{|\af| = i}\, (w^{*,k})_{2\af},
\]
\[
\sum_{|\af| = i}\, (w^{*,k})_{2\af} = \langle \sum_{|\af| = i} \,x^{2\af}, z^k \rangle
 \leq \langle \|x\|_2^{2i}, z^{k} \rangle \leq  \rho^i T_1.
\]
}
\noindent
The above then implies that
\[
\|z^k\|_2 \leq  (1 + \rho + \cdots + \rho^{k-k_1}) T_1.
\]
Fix $k_2>k_1$ such that $y^{*,k}$ is a subvector of
$z^k|_{k_2-k_1}$. From $y^{*,k} = z^k|_{\A}$, we get
\[
\| y^{*,k}\|_2 \leq  \|z^k\|_2  \leq  (1 + \rho + \cdots + \rho^{k_2-k_1}) T_1.
\]
This shows that the sequence $\{y^{*,k}\}$ is bounded.

Second, we show that every accumulation point of $\{y^{*,k}\}$
is a minimizer of \reff{opt:RAK}.
Let $y^*$ be such an arbitrary one.
We can generally further assume $y^{*,k} \to y^*$ as $k\to\infty$.
We need to show that $y^*$ is a minimizer of \reff{opt:RAK}.
Since $K$ is compact, by the archimedeanness of $I(h)+Q(g)$,
we can generally assume $K \subseteq B(0,\rho)$ with $\rho<1$,
up to a scaling. In the above, we have shown that
\[
\|z^k\|_2 \leq   T_1/(1-\rho).
\]
This implies that the sequence $\{z^k\}$ is bounded.
Each tms $z^k$ can be extended to a vector in
$\re^{\N_{\infty}^n}$ by adding zero entries to the tailing.
The set $\re^{\N_{\infty}^n}$ is a Hilbert space, equipped with the inner product
{\small
\[
\langle u, v \rangle := \sum_{ \af \in \N^n } u_\af v_\af,
\quad \forall \, u, v \in \re^{\N_{\infty}^n}.
\]
}\noindent
So, the sequence $\{ z^k \}$ is also bounded in $\re^{\N_{\infty}^n}$.
By Alaoglu's Theorem (cf.~\cite[Theorem~V.3.1]{Conway} or \cite[Theorem~C.18]{LasBok}),
it has a subsequence $\{z^{k_j} \}$ that is convergent in the weak-$\ast$ topology.
That is, there exists $z^* \in \re^{\N_{\infty}^n}$ such that
\[
\langle f, z^{k_j} \rangle \, \to \,  \langle f,  z^* \rangle
 \quad \mbox{ as } j \to \infty
\]
for all $f \in \re^{\N_{\infty}^n}$. Clearly, this implies that
for each $\af \in \N^n$
\be \label{2k2t->w*2t}
(z^{k_j})_{\af} \to  (z^*)_{\af}.
\ee
Since $z^k|_{\A} = y^{*,k} \to y^*$, we get $z^*|_{\A} = y^*$.
Note that
$
z^{k_j} \in \Phi_{k_j}(g) \cap E_{k_j}(h)
$
for all $j$. For each $r=1,2,\ldots$,
if $k_j \geq 2r$, then (cf.~Section~2.1)
\[
L_{h_i}^{(r)}(z^{(k_j)}) = 0\,(1\leq i \leq m_1), \quad
L_{g_i}^{(r)}(z^{(k_j)})\succeq 0 \,(0\leq i \leq m_2).
\]
Hence, \reff{2k2t->w*2t} implies that for all $r=1,2,\ldots$
\[
L_{h_i}^{(r)}(z^*) = 0\,(1\leq i \leq m_1), \quad
L_{g_i}^{(r)}(z^*)\succeq 0 \,(0\leq i \leq m_2).
\]
This means that $z^* \in \re^{\N_{\infty}^n}$ is a full moment sequence
whose localizing matrices of all orders are positive semidefinite.
By Lemma~3.2 of Putinar \cite{Put}, $z^*$ admits a $K$-measure.
Clearly, $\langle a_i, y^*\rangle = b_i$ for all $i$.
So, $y^* = z^*|_{\A}$ is feasible for \reff{opt:RAK}
and $c^{min} \leq  \langle c, y^*\rangle$.
Because \reff{mincy:k-MOM} is a relaxation of \reff{opt:RAK}
and $w^{*,k}$ is a minimizer of \reff{mincy:k-MOM}, it holds that
\[
c^{min} \geq \langle c, y^{*,k}\rangle, \quad k=1,2,\ldots
\]
Hence, we get
\[
c^{min} \geq \lim_{k\to \infty} \langle c, y^{*,k}\rangle = \langle c, y^*\rangle.
\]
Therefore, $c^{min} = \langle c, y^*\rangle$ and
$y^*$ is a minimizer of \reff{opt:RAK}.

\medskip
(iii) For each $\eps>0$, there exists $\lmd^\eps$ such that
$c(\lmd^\eps) \in \mathscr{P}_{\A}(K)$ and
\[ b^{max} - \eps <  b^T \lmd^\eps \leq b^{max} . \]
Let $\lmd^0$ be as in the proof of item (i),
and let $\lmd(\eps) = (1-\eps) \lmd^\eps + \eps \lmd^0$.
Then $c(\lmd(\eps))>0$ on $K$ and
\[
b^T\lmd(\eps) = (1-\eps) b^T\lmd^\eps + \eps b^T\lmd^0
> (1-\eps) (b^{max} - \eps) + \eps b^T\lmd^0.
\]
By Theorem~\ref{thm:PutThm}, if $k$ is big enough,
then $c(\lmd(\eps)) \in I_{2k}(h)+Q_k(g)$ and
\[
b^k > (1-\eps) (b^{max} - \eps) + \eps b^T\lmd^0.
\]
Since $b^k \leq b^{max}$ for all $k$,
we get $b^k \to b^{max}$ as $k \to \infty$.
\qed
\end{proof}

Second, we prove the finite convergence of Algorithm~\ref{alg:momopt}
under a general assumption.

\begin{ass}  \label{as:c*inQ+KT}
Suppose $\lmd^*$ is a maximizer of \reff{maxb:PAK}
and $c^*:=c(\lmd^*)$ satisfies:
\bit
\item [(i)] There exists $k_1 \in \N$ such that
$c^* \in I_{2k_1}(h)+Q_{k_1}(g)$;

\item [(ii)] The optimization problem
\be \label{min-c*-K}
\min \quad c^*(x) \quad s.t. \quad h(x) = 0, \, g(x) \geq 0
\ee
has finitely many KKT points $u$ with $c^*(u)=0$.

\eit
\end{ass}

\begin{theorem} \label{thm:mom:ficvg}
Let $K$ be as in \reff{def:K}.
Suppose \reff{opt:RAK} is feasible, \reff{maxb:PAK} has an interior point,
$\re[x]_{\A}$ is $K$-full and Assumption~\ref{as:c*inQ+KT} holds.
If $w^{*,k}$ is optimal for \reff{mincy:k-MOM},
then $w^{*,k}|_{2t}$ is flat for all $k > t$ big enough.
\end{theorem}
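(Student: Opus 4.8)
The plan is to combine three ingredients: strong duality between the semidefinite relaxations (already available from Theorem~\ref{thm:alg:asymp}(i)), the convergence of the objective values, and a flat extension theorem of Curto--Fialkow type applied to the optimal moment matrix. The idea is that since Assumption~\ref{as:c*inQ+KT}(i) gives $c^* = c(\lmd^*) \in I_{2k_1}(h)+Q_{k_1}(g)$, the pair $\lmd^*$ is feasible for \reff{mxblm:k-SOS} for all $k\ge k_1$, so $b^k = b^{\max}$ for such $k$; by Theorem~\ref{thm:alg:asymp}(i) the strong duality $c^k=b^k$ also holds for $k$ large, hence $c^k = b^{\max}$. Since \reff{opt:RAK} is feasible and \reff{maxb:PAK} has an interior point, weak duality forces $c^{\min}=b^{\max}=c^k$ for all large $k$; in particular $\langle c, y^{*,k}\rangle = c^{\min}$.

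Next I would extract a complementarity condition. Write $c^*=c(\lmd^*) = \sigma_0 + g_1\sigma_1+\cdots+g_{m_2}\sigma_{m_2} + \sum_i h_i\phi_i$ with $\sigma_j\in\Sig_{n,2k-\deg(g_j)}$ and appropriate $\phi_i$, for $k\ge k_1$. Pairing $c^*$ with the optimal $w^{*,k}\in \Phi_k(g)\cap E_k(h)$ and using $\langle c^*, w^{*,k}\rangle = \langle c, w^{*,k}\rangle - b^T\lmd^* = c^k - b^{\max} = 0$, the sum-of-squares decomposition yields $\langle \sigma_j g_j, w^{*,k}\rangle = 0$ for each $j$, i.e. $\sigma_j^{1/2}{}^T L_{g_j}^{(k)}(w^{*,k}) \sigma_j^{1/2}=0$ for every square piece of $\sigma_j$, which is the usual KKT-type complementarity between the localizing matrices and the dual SOS certificate. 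Together with $w^{*,k}\in E_k(h)$, this is exactly the structure that, in the framework of \cite{Nie-opcd} (optimality-conditions-based finite convergence) or of the Jacobian/flat-extension machinery, forces the representing measure of $w^{*,k}$ to be supported on the KKT points of \reff{min-c*-K} with $c^*(u)=0$. Assumption~\ref{as:c*inQ+KT}(ii) says there are only finitely many such points, say $u_1,\dots,u_N$.

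Finally, I would push the "only finitely many atoms" conclusion down to flatness. Because $\{y^{*,k}\}$ (and in fact the whole bounded block $z^k = w^{*,k}|_{2k-2k_1}$, as in the proof of Theorem~\ref{thm:alg:asymp}(ii)) lies in a compact set, pass to a weak-$\ast$ limit $z^*\in\re^{\N_\infty^n}$ which admits a $K$-measure; the complementarity passes to the limit, so that measure is supported in $\{u_1,\dots,u_N\}$, hence is $r$-atomic with $r\le N$, and moreover $z^*$ restricted to any order is flat once the order exceeds the number of atoms. The finite-convergence step is then a standard rank-stabilization/perturbation argument: for $k$ large the truncation $w^{*,k}|_{2t}$ is close to $z^*|_{2t}$, the moment matrix $M_t(z^*)$ has rank $\le N$ and already satisfies the flat extension condition $\rank M_{t-d_K}(z^*)=\rank M_t(z^*)$ for $t$ above a fixed threshold, and since rank is lower-semicontinuous while the flat-extension equality is a closed condition on the relevant minors, $w^{*,k}|_{2t}$ inherits flatness for $k>t$ both sufficiently large. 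The main obstacle is making the passage from the complementarity relations to "support $\subseteq$ KKT points" rigorous in the present general ($\A$, incomplete-moment) setting — i.e. verifying that the hypotheses of the finite-convergence results of \cite{Nie-opcd}/\cite{Nie-jac} are met by the dual certificate of $c^*$ — and then carefully transferring rank stabilization from the limit $z^*$ back to the finite-order optimizers $w^{*,k}$ uniformly in $t$; the rest is bookkeeping with the bounds already established in the proof of Theorem~\ref{thm:alg:asymp}.
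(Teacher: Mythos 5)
Your opening step is essentially the paper's: from Assumption~\ref{as:c*inQ+KT}(i) and the absence of a duality gap between \reff{opt:RAK} and \reff{maxb:PAK} one gets $\langle c^*, w^{*,k}\rangle = c^k - b^{\max} = 0$ for $k\ge k_1$, and you correctly see that the theorem should then follow from the flat-truncation machinery attached to the auxiliary problem \reff{min-c*-K}. But the argument you actually give for the flatness conclusion has a genuine gap. After passing to the weak-$\ast$ limit $z^*$ (which does admit a finitely atomic $K$-measure and is flat at high orders), you try to transfer flatness back to $w^{*,k}|_{2t}$ by asserting that rank is lower semicontinuous and that ``the flat-extension equality is a closed condition on the relevant minors.'' This is exactly backwards: lower semicontinuity only gives $\rank M_t(w^{*,k}|_{2t}) \ge \rank M_t(z^*)$ and $\rank M_{t-d_K}(w^{*,k}|_{2t}) \ge \rank M_{t-d_K}(z^*)$, with no upper control, so a perturbation of a flat $z^*$ can perfectly well have $\rank M_t$ strictly larger than $\rank M_{t-d_K}$. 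Rank equality is neither an open nor a closed condition, and this failure is precisely why flat truncation is a theorem requiring its own proof rather than a perturbation remark. A secondary circularity: you speak of ``the representing measure of $w^{*,k}$'' being supported on the KKT points, but $w^{*,k}$ is not known to admit any representing measure a priori --- that is what flatness of a truncation would deliver.

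The paper sidesteps both issues by reducing cleanly to Theorem~2.2 of \cite{Nie-FT}: since $\langle c^*, w^{*,k}\rangle = 0$, the (normalized) $w^{*,k}$ is a minimizer of the standard moment relaxation \reff{mom:minc*} of \reff{min-c*-K}, whose Lasserre hierarchy achieves the value $0$ at order $k_1$ by Assumption~\ref{as:c*inQ+KT}(i) and which has finitely many critical points at value $0$ by Assumption~\ref{as:c*inQ+KT}(ii); Theorem~2.2 of \cite{Nie-FT} then yields flat truncation of $w^{*,k}$ directly. The paper also treats the degenerate case $(w^{*,k})_{\mathbf 0}=0$ separately (there the whole truncation $w^{*,k}|_{2k-2}$ vanishes, hence is trivially flat), a case your normalization silently assumes away. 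To repair your write-up you would either have to reprove the relevant part of \cite{Nie-FT} or, more economically, make the reduction to it explicit as the paper does.
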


\begin{remark}
If $c^*$ is generic on the boundary of the cone $\mathscr{P}_{\A}(K)$, then
Assumption~\ref{as:c*inQ+KT} holds (cf.~\cite{NR09,Nie-opcd}).
For instance, if some standard
optimality conditions are satisfied for
the optimization problem \reff{min-c*-K},
then Assumption~\ref{as:c*inQ+KT} is satisfied.
Theorem~\ref{thm:mom:ficvg} implies that
Algorithm~\ref{alg:momopt} generally converges in finitely many steps.
This fact has been observed in numerical experiments.
\end{remark}

\begin{proof}[of Theorem~\ref{thm:mom:ficvg}]\,
The existence of a minimizer $(y^{*,k},w^{*,k})$
is shown in Theorem~\ref{thm:alg:asymp}.
Because \reff{opt:RAK} is feasible and \reff{maxb:PAK} has an interior point,
\reff{opt:RAK} has a minimizer $y^*$
and there is no duality gap between \reff{opt:RAK} and \reff{maxb:PAK}, i.e.,
\[
0 = \langle c, y^* \rangle - b^T\lmd^* =
\langle c^*, y^* \rangle.
\]
Clearly, $c^* \geq 0$ on $K$.
Let $\mu^*$ be a $K$-representing measure for $y^*$. Then,
every point in $\supp{\mu^*}$ is a minimizer of \reff{min-c*-K},
and the minimum value is $0$.
The $k$-th Lasserre's relaxation for \reff{min-c*-K} is
(cf. \cite{Las01,Nie-FT})
\be \label{sos:minc*}
\gamma_k := \max  \quad \gamma \quad s.t. \quad
c^*-\gamma \in  I_{2k}(h) + Q_k(g).
\ee
Then, $\gamma_k=0$ for all $k\geq k_1$.
The sequence $\{\gamma_k\}$ has finite convergence.
The relaxation \reff{sos:minc*} achieves its optimal value for all $k \geq k_1$,
by Assumption~\ref{as:c*inQ+KT} (i).
The dual problem of \reff{sos:minc*} is
\be \label{mom:minc*}
\min_{ w }  \quad \langle c^*, w \rangle \quad s.t. \quad
w \in \Phi_k(g) \cap E_k(h), w_{\mathbf{0}} = 1.
\ee
By Assumption~\ref{as:c*inQ+KT}(ii),
\reff{min-c*-K} has only finitely many critical points on which $c^*=0$.
So, Assumption~2.1 in \cite{Nie-FT} for the problem \reff{min-c*-K} is
satisfied\footnote{In \cite{Nie-FT}, optimization problems with only
inequalities were discussed. If there are equality constraints,
Assumption~2.1 in \cite{Nie-FT} can be naturally modified to include all
equalities, and the conclusion of Theorem~2.2 of \cite{Nie-FT} is still true,
with the same proof.}.
Suppose $w^{*,k}$ is optimal for \reff{mincy:k-MOM}.

If $(w^{*,k})_{\mathbf{0}}=0$, then $vec(1)^T M_k(w^{*,k}) vec(1) =0$
and $M_k(w^{*,k}) vec(1) =0$, because $M_k(w^{*,k})\succeq 0$.
(Here $vec(p)$ denotes the coefficient vector of a polynomial $p$.)
This implies that $M_k(w^{*,k}) vec(x^\af)=0$ for all $|\af| \leq k-1$
(cf.~\cite[Lemma~5.7]{Lau}). For all $|\af|\leq 2k-2$,
we can write $\af = \bt + \eta$ with $|\bt|,|\eta| \leq k-1$,
and get
\[
(w^{*,k})_\af= vec(x^\bt)^T M_k(w^{*,k}) vec(x^\eta) =0.
\]
So, the truncation $w^{*,k}|_{2k-2}$ is flat.

If $(w^{*,k})_{\mathbf{0}}>0$, we can scale $w^{*,k}$
such that $(w^{*,k})_{\mathbf{0}}=1$.
Then $w^{*,k}$ is a minimizer of \reff{mom:minc*}
because $\langle c^*, w^{*,k} \rangle = 0$ for all $k\geq k_1$.
By Theorem~2.2 of \cite{Nie-FT},
$w^{*,k}$ has a flat truncation $w^{*,k}|_{2t}$ for some $t \geq \deg(\A)/2$,
for all $k$ sufficiently large.
\qed
\end{proof}

\subsection{Some examples}

Semidefinite relaxations \reff{mincy:k-MOM} and \reff{mxblm:k-SOS}
can be solved by {\tt GloptiPoly~3} \cite{GloPol3}.

\begin{exm}
Let $K$ be the simplex $\Dt_n = \{x\in \re_+^n: x_1+\cdots+x_n=1\}$
and $\A = \{\af \in \N^n: |\af| = 2\}$.
Then $\mathscr{P}_{\A}(\Dt_n)$ is the cone
of $n\times n$ copositive matrices (denoted as $\mathtt{Co}(n)$),
and $\mathscr{R}_{\A}(\Dt_n)$ is the cone
of $n\times n$ completely positive matrices (denoted as $\mathtt{Cp}(n)$).
The simplex $\Dt_n$ is defined by the tuples
$h=(x_1+\cdots+x_n - 1)$ and $g=(x_1,\ldots,x_n)$, as in \reff{def:K}.\\
(i) Let $c = (x_1+\cdots+x_6)^2$
and $a_1 = x_1x_2-x_2x_3+x_3x_4-x_5x_6+x_6x_1$.
We want to know the maximum $\lmd$ such that
$c  - \lmd a_1 \in \mathtt{Co}(6)$.
%%%%%%%%%%%%%%%%%%%%%%%%%%%%%%
%%%%%%%%%%%%%%%%%%%%%%%%%%%%%%
\iffalse

clear all,
n = 6;  mpol x  6;
b = 1;
c = (sum(x))^2;
a = x(1)*x(2)-x(2)*x(3)+x(3)*x(4)-x(5)*x(6)+x(6)*x(1);
Kx = [sum(x) == 1];
for i=1:n,
   Kx = [ Kx, x(i)>=0];
end
Kmom = [ mom(a) == b ];
ord = 2;
P = msdp(min( mom(c) ), Kmom, Kx, ord);
[sta, objval] = msol(P);

[A,b,c,K] = msedumi(P); pars.fid = 0;
[xsol,ysol,info] = sedumi(A,b,c,K,pars);
yM = c-A'*ysol;
ystar = yM( K.f+1 : K.f+K.s(1)^2);

\fi
%%%%%%%%%%%%%%%%%%%%%%%%%%%%%%%%%%%%%%%%%%%
We formulate this problem in the form \reff{maxb:PAK}
and then solve it by Algorithm~\ref{alg:momopt}.
For $k=2$, $y^{*,2} \in \mathtt{Cp}(6)$ (because it admits the measure
$4 \dt_{(1/2,1/4,0,0,0, 1/4)}$) and $\lmd^{*,2} = 4$.
Since $c^k=b^k$ for $k=2$,
we know the maximum $\lmd$ in the above is $4$. \\
(ii) Consider the matrix
\[
C = \bbm
 * &  1  &  2  & 3  & 4  \\
 1 &  *  &  1  & 2  & 3  \\
 2 &  1  &  *  & 1  & 2  \\
 3 &  2  &  1  & *  & 1  \\
 4 &  3  &  2  & 1  & *  \\
\ebm
\]
where each $*$ means the entry is not given.
We want to know the smallest trace of $C$
for $C \in \mathtt{Cp}(5)$.
%%%%%%%%%%%%%%%%%%%%%%%%%%%%%%
%%%%%%%%%%%%%%%%%%%%%%%%%%%%%%
\iffalse

clear all,
n = 5;  mpol x  5;
b = [1 2 3 4    1 2 3   1 2   1]' ;
c = x'*x;
a = [];
Kx = [sum(x) == 1];
for i=1:n,
   Kx = [ Kx, x(i)>=0];
   a =[a; x(i)*x(i+1:n)];
end
Kmom = [ mom(a) == b ];
ord = 2;
P = msdp(min( mom(c) ), Kmom, Kx, ord);
%[sta, objval] = msol(P);
[A,b,c,K] = msedumi(P); pars.fid = 0;
[xsol,ysol,info] = sedumi(A,b,c,K,pars);
yM = c-A'*ysol;
yMat = mat( yM(K.f+1:K.f+K.s(1)^2) );
yMat(2:n+1,2:n+1),
ystar = yM( K.f+n+1+1 : K.f+n+1+n*(n+1)/2 );

hmx = mmon(x,2,2);  Kmom = [ mom(hmx) == ystar ];
Kx = [Kx, 1-x'*x>=0];

repeat = 0;
while (repeat < 1)
repeat = repeat + 1;
fprintf('\n\nREPEAT =  %d \n ', repeat);

brx = mmon(x,0,2);  R1 = randn(length(brx));
R = brx'*R1'*R1*brx;

ord = 2; flat = 0;
while( flat ==0 )
P = msdp(min( mom(R) ), Kmom, Kx, ord);
%[sta, objval] = msol(P);
[A,b,c,K] = msedumi(P); pars.fid = 0;
[xsol,ysol,info] = sedumi(A,b,c,K,pars);
yM = c-A'*ysol;
xM = yM(K.f+1:K.f+K.s(1)^2); M = mat(xM)/xM(1);
rankM = [];
  for k = 1 : ord
      Mk = M(1:nchoosek(n+k,k), 1:nchoosek(n+k,k) );
      rankM(1,k) = length( find( svd(Mk)>=1e-6 ) );
   end
fprintf('rank = %s \n', num2str(rankM));
   if length( find( rankM(2:ord)-rankM(1:ord-1)==0 )  )> 0,
       flat = 1;
   else
      ord = ord + 1;
   end
end

if rankM(ord-1)<=5, pause, end;

end

Y = c-A'*ysol;  M = mat(Y(K.f+1:K.f+K.s(1)^2)); CC = M(2:6,2:6);
oord = ord;
Mflat = M(1:nchoosek(n+oord,oord), 1:nchoosek(n+oord,oord) );
Mflat = Mflat/Mflat(1,1);
spt = extractmin(Mflat,n,oord);
for k=1:length(spt),  spt{k}', end

\fi
%%%%%%%%%%%%%%%%%%%%%%%%%%%%%%%%%%%%%%%%%%%
We formulate this problem in the form \reff{opt:RAK}
%
%$c = x^Tx$, $b = [1, 2, 3, 4, 1, 2, 3, 1,2, 1]^T$ and
%\[
%a_1 = x_1x_2, a_2 = x_1x_3, a_3 = x_1x_4, a_4 = x_1x_5, a_5 = x_2x_3,
%\]
%\[
%a_6 = x_2x_4,  a_7 = x_2x_5, a_8 = x_3x_4, a_9 = x_3x_5, a_{10}=x_4x_5.
%\]
%
and then solve it by Algorithm~\ref{alg:momopt}.
For $k=2$, $y^{*,2} \in \mathscr{R}_{\A}(\Dt_5)$
(verified by Algorithm~4.2 of \cite{Nie-ATKMP}).
So, the minimum trace of $C \in \mathtt{Cp}(5)$
is $20.817217,$\footnote{Throughout the paper,
six decimal digits are shown for numerical results.}
while the diagonal entries $C_{11},\ldots,C_{55}$ are
$6.031873$, $3.968627$, $0.816217$, $3.968627$, $6.031873$ respectively.
\qed
\end{exm}

\begin{exm} \label{exm:4.7}
Let $K= B(0,1)$ be the unit ball in $\re^2$ and $\A = \N_6^4$.
We want to know the maximum $\lmd_1 + \lmd_2$ such that
{\small
\[
x_1^4x_2^2+6x_1^2x_2^2+4x_1x_2^4+x_2^6+x_2^2 -
\lmd_1 ( x_1^3x_2^2+x_1x_2^2 ) -
\lmd_2 ( x_1^2x_2^4+x_2^4)  \in \mathscr{P}_{\A}(K).
\]
\noindent}We formulate
this problem in the form \reff{maxb:PAK}
and then solve it by Algorithm~\ref{alg:momopt}.
When $k=3$, $y^{*,3} \in \mathscr{R}_{\A}(K)$
(verified by Algorithm~4.2 of \cite{Nie-ATKMP}),
and $\lmd^{*,3}=(4,2)$. Since $c^k=b^k$ for $k=2$,
the optimal $(\lmd_1,\lmd_2)$ in the above is $(4,2)$.
\qed
\end{exm}

\begin{exm}
Let $K= \mathbb{S}^2$ and $\A = \{\af \in \N^3: |\af| = 6\}$.
Then, $\mathscr{P}_{\A}(K)=P_{3,6}$
and $\mathscr{R}_{\A}(K)$ is the cone of sextic tms'
admitting measures supported in $\mathbb{S}^2$. \\
%
%The sphere $ \mathbb{S}^{n-1}$ is defined by
%$h=(\|x\|_2^2 - 1)$ and $g=(0)$.\\
%
(i) The form
$c = x_1^6+x_2^6+x_3^6$ lies in the interior of $P_{3,6}$. Let
\[
a_1 = x_1^2x_2^4+ x_2^2x_3^4 + x_3^2 x_1^4, \,
a_2 = x_1^3x_2^3+ x_2^3x_3^3 + x_3^3 x_1^3, \,
a_3 = x_1^5x_2+ x_2^5x_3 + x_3^5 x_1.
\]
We want to know the maximum $\lmd_1+\lmd_2+\lmd_3$ such that
\[
c  - \lmd_1 a_1 - \lmd_2 a_2 - \lmd_3 a_3  \in P_{3,6}.
\]
We formulate this problem in the form \reff{maxb:PAK}
%%%%%%%%%%%%%%%%%%%%%%%%%%%%%%
%%%%%%%%%%%%%%%%%%%%%%%%%%%%%%
\iffalse

clear all,
n = 3;  mpol x  3;
b = [1 1 1];
c = sum(x.^6);
a = [x(1)^2*x(2)^4+x(2)^2*x(3)^4+x(3)^2*x(1)^4, ...
x(1)^3*x(2)^3+x(2)^3*x(3)^3+x(3)^3*x(1)^3, ...
x(1)^5*x(2)+x(2)^5*x(3)+x(3)^5*x(1)];
Kx = [1-x'*x==0];
Kmom = [ mom(a) == b ];
ord = 3;
P = msdp(min( mom(c) ), Kmom, Kx, ord);
[sta, objval] = msol(P);

[A,b,c,K] = msedumi(P); pars.fid = 0;
[xsol,ysol,info] = sedumi(A,b,c,K);
xsol(1:3),

grmX = xsol( 4 : K.f+ sum( (K.s).^2 ) );
cAy = c-A'*ysol;

POP = msdp( min( mmon(x,0,ord)'*mmon(x,0,ord) ), [1-x'*x==0],  ord);
[Ap,bp,cp,Kp] = msedumi(POP);
Ap = [cp'; -Ap];
clmd = Ap( 1 : nchoosek(3+6,6), 1 : Kp.f+ sum( (Kp.s).^2 ) )*grmX;
clmd( mon_order([2 4 0] ) ),
clmd( mon_order([3 3 0] ) ),
clmd( mon_order([5 1 0] ) ),

\fi
%%%%%%%%%%%%%%%%%%%%%%%%%%%%%%%%%%%%%%%%%%%
and then solve it by Algorithm~\ref{alg:momopt}.
When $k=3$, $y^{*,3} \in \mathscr{R}_{\A}(K)$
(it admits the measure $9 \dt_{(1,1,1)/\sqrt{3}}$), and
\[
%
%\lmd^{*,3}=(-1.4404, 2.2190, 0.2214).
%
\lmd^{*,3}=( -1.440395,   2.218992,  0.221403).
\]
Since $c^k=b^k$ for $k=2$, we know $\lmd^{*,3}$ is also optimal for the above.
\\
%%%%%%%%%%%%%%%%%%%%%
(ii) We want to know the minimum value of
$\int (x_1^6+x_2^6+x_3^6) \mathtt{d} \mu$ for all
measures $\mu$ supported in $\mathbb{S}^2$ such that
{\small
\[
\int x_1^3x_2^3 \mathtt{d} \mu =
\int x_2^3x_3^3 \mathtt{d} \mu =
\int x_3^3x_1^3 \mathtt{d} \mu,
\int x_1^2x_2^2x_3^2 \mathtt{d} \mu = 1,
\int (x_1^4x_2^2 + x_2^4x_3^2 + x_3^4x_1^2) \mathtt{d} \mu = 3.
\]
}\noindent
We formulate the problem in the form \reff{opt:RAK}
%
%\[
%c = x_1^6+x_2^6+x_3^6, \quad
%a_1 = x_1^3x_2^3 - x_2^3x_3^3, \quad
%a_2 = x_1^3x_2^3 - x_3^3x_1^3,
%\]
%\[
%a_3 = x_1^2x_2^2x_3^2, \quad
%a_4 = x_1^4x_2^2 + x_2^4x_3^2 + x_3^4x_1^2, \quad
%b = (0, 0, 1, 3).
%\]
%
%%%%%%%%%%%%%%%%%%%%%%%%%%%%%%
%%%%%%%%%%%%%%%%%%%%%%%%%%%%%%
\iffalse

clear all,
n = 3;  mpol x  3;
b = [0 0 1 3];
c = sum(x.^6);
a = [x(1)^3*x(2)^3-x(2)^3*x(3)^3, x(1)^3*x(2)^3-x(3)^3*x(1)^3, ...
prod(x)^2, x(1)^4*x(2)^2+x(2)^4*x(3)^2+x(3)^4*x(1)^2];
Kx = [1-x'*x==0, sum(x)>=0];
Kmom = [ mom(a) == b ];
ord = 3;
P = msdp(min( mom(c) ), Kmom, Kx, ord);
[sta, objval] = msol(P);

\fi
%%%%%%%%%%%%%%%%%%%%%%%%%%%%%%%%%%%%%%%%%%%
and then solve it by Algorithm~\ref{alg:momopt}.
When $k=3$, $y^{*,3} \in \mathscr{R}_{\A}(K)$
because it admits the measure
\[
\frac{27}{4} \left( \dt_{(1,1,1)/\sqrt{3}} +  \dt_{(-1,1,1)/\sqrt{3}} +
\dt_{(1,-1,1)/\sqrt{3}} + \dt_{(1,1,-1)/\sqrt{3}} \right).
\]
So, the minimum of
$\int (x_1^6+x_2^6+x_3^6) \mathtt{d} \mu$
for $\mu$ satisfying the above is $3$.
\qed
\end{exm}

If a linear optimization problem
with cone $\mathscr{R}_{\A}(K)$ is given in the form \reff{maxb:PAK},
it can also be equivalently formulated in the form \reff{opt:RAK}.
For instance, given $z_0, \ldots, z_m \in \re^{\A}$
and $\ell = (\ell_1,\ldots, \ell_m) \in \re^m$, consider the problem
\be  \label{mxby:RAK}
\left\{ \baray{rl}
\max &   \ell_1 \lmd_1 + \cdots + \ell_m \lmd_m  \\
s.t. &  z_0 - \lmd_1 z_1 - \cdots - \lmd_m z_m \in \mathscr{R}_{\A}(K).
\earay \right.
\ee
Let $\{p_1, \ldots, p_r\}$ be a basis of
the orthogonal complement of $\mbox{span}\{z_1,\ldots,z_m\}$.
Then, $y \in z_0 + \mbox{span}\{z_1,\ldots,z_m\}$ if and only if
\[
p_1^T y = p_1^T z_0, \ldots, p_m^T y = p_m^T z_0.
\]
We can consider each $p_i$ as a polynomial in $\re[x]_{\A}$.
Let $Z = \bbm z_1  & \cdots & z_m \ebm$. Assume $\rank(Z) = m$.
If $y = z_0 -Z \lmd$, then
\[
\lmd = (Z^TZ)^{-1} Z^T(z_0-y).
\]
Let $p_0$ be a polynomial in $\re[x]_{\A}$ such that
\[
\langle p_0, y \rangle =  \ell^T (Z^TZ)^{-1} Z^T y = \ell^T \lmd.
\]
Then \reff{mxby:RAK} is equivalent to
\be  \label{min:p0y:RAK}
\left\{ \baray{rl}
\min &   \langle p_0, y \rangle \\
s.t. &  \langle p_i, y \rangle = p_i^T z_0\,(i=1,\ldots,m),
\quad y \in  \mathscr{R}_{\A}(K).
\earay \right.
\ee
If $y^*$ is a minimizer of \reff{min:p0y:RAK}, then
\[
\lmd^* = (Z^TZ)^{-1} Z^T(z_0-y^*)
\]
is a maximizer of \reff{mxby:RAK}.
Similarly, every linear optimization problem with cone $\mathscr{P}_{\A}(K)$,
which is given in the form \reff{opt:RAK},
can also be formulated like \reff{maxb:PAK}.

\begin{exm}
Let $K = \mathbb{S}^{n-1}$ and $\A = \{\af \in \N^n: |\af|=d\}$
($d$ is even). Then $\mathscr{R}_{\A}(K)$ is equivalent to $Q_{n,d}$,
the cone of sums of $d$-th power of real linear forms in $n$ variables
(cf. \cite[Sec.~6.2]{Nie-ATKMP}). \\
(i) The sextic form $(x_1^2+x_2^2+x_3^2)^3$ belongs to $Q_{3,4}$ (cf. \cite{Rez92}).
We want to know the maximum $\lmd$ such that
\[
(x_1^2+x_2^2+x_3^2)^3 - \lmd (x_1^6+x_2^6+x_3^6)  \in Q_{3,6}.
\]
The problem is equivalent to finding the biggest $\lmd$ such that
$
z_0 - \lmd z_1 \in \mathscr{R}_{\A}(K),
$
where $z_0,z_1$ are tms' whose entries are zeros except
{\small
\[
(z_0)_{(6,0,0)} = (z_0)_{(0,6,0)} = (z_0)_{(0,0,6)} = 1, \quad
(z_0)_{(2,2,2)} = 1/15,
\]
\[
(z_0)_{(4,2,0)} = (z_0)_{(2,4,0)} = (z_0)_{(0,4,2)} = (z_0)_{(0,2,4)} =
(z_0)_{(4,0,2)} = (z_0)_{(2,0,4)} = 1/5,
\]
\[
(z_1)_{(6,0,0)} = (z_1)_{(0,6,0)} = (z_1)_{(0,0,6)} = 1.
\]
%\[
%z_0 = ( 1,  0,0,   1/5,  0,    1/5,   0,0,0,0,
%1/5,  0,   1/15,   0,  1/5,
% 0,0,0,0,0,0,   1, 0,  1/5,  0,  1/5,  0,  1),
%\]
%\[
%z_1 = ( 1,  0,0,   0,  0,    0,   0,0,0,0,
%0,  0,   0,   0,  0,   0,0,0,0,0,0,   1, 0,  0,  0,  0,  0,  1).
%\]
\noindent}We formulate
this problem in the form \reff{min:p0y:RAK}
and then solve it by Algorithm~\ref{alg:momopt}.
For $k=4$, $y^{*,4} \in \mathscr{R}_{\A}(K)$
(verified by Algorithm~4.2 of \cite{Nie-ATKMP}), and $\lmd^{*,4}=2/3$.
Since $c^k=b^k$ for $k=4$,
the maximum $\lmd$ in the above is $2/3$,
which confirms the result of Reznick \cite[p.~146]{Rez92}.

\noindent
(ii) We want to know the maximum
$\lmd_1+\lmd_2$ such that
\[
(x_1^2+x_2^2+x_3^2)^3 - \lmd_1 (x_1^3x_2^3+x_2^3x_3^3+x_3^3x_1^3)
- \lmd_2 x_1^2x_2^2x_3^2  \in Q_{3,6}.
\]
The problem is equivalent to
\[
\max \quad \lmd_1+\lmd_2 \quad s.t. \quad
z_0 - \lmd_1 z_1 - \lmd_2 z_2 \in \mathscr{R}_{\A}(K)
\]
where $z_0$ is same as in (i) and $z_1,z_2$ are tms'
whose entries are zeros except
\[
(z_1)_{(3,3,0)} = (z_1)_{(3,0,3)} = (z_1)_{(0,3,3)} = 1/20, \quad
(z_2)_{(2,2,2)} = 1/90.
\]
We formulate this problem in the form \reff{min:p0y:RAK}
and solve it by Algorithm~\ref{alg:momopt}.
For $k=3$, $y^{*,3} \in \mathscr{R}_{\A}(K)$
(verified by Algorithm~4.2 of \cite{Nie-ATKMP}), and $\lmd^{*,3}=(2,6)$.
Since $c^k=b^k$ for $k=3$, we know
the optimal $\lmd$ in the above is $(2,6)$.
The SOEP decomposition of the polynomial
$
(x_1^2+x_2^2+x_3^2)^3 - 2(x_1^3x_2^3+x_2^3x_3^3+x_3^3x_1^3)
- 6 x_1^2x_2^2x_3^2
$
is
{\small \small
\[
\frac{7}{50} \sum_{ 1\leq i < j\leq 3 } (x_i - x_j )^6 +
\sum_{ 1\leq i \ne j\leq 3 }
\left(
\left( \frac{1}{\sqrt{10}}-\frac{\sqrt{2}}{5} \right)^{1/3} x_i +
\left( \frac{1}{\sqrt{10}}+\frac{\sqrt{2}}{5} \right)^{1/3} x_j
\right)^6.
\]
}
\qed
\end{exm}

\section{Feasibility and infeasibility}
\label{sec:feas}
\setcounter{equation}{0}

A basic question in linear optimization is to check
whether a cone intersects an affine subspace or not.
For the cones $\mathscr{R}_{\A}(K)$ and $\mathscr{P}_{\A}(K)$,
this question is about checking whether the optimization problems
\reff{opt:RAK} and \reff{maxb:PAK} are feasible or not.
If they are feasible, we want to get a feasible point;
if they are not, we want a certificate for the infeasibility.

\subsection{Finding feasible points}

First, we discuss how to check whether \reff{opt:RAK} is feasible or not.
Suppose $a_1,\ldots,a_m \in \re[x]_{\A}$ and $b\in\re^m$ are given
as in \reff{opt:RAK}, while the objective $c$ is not necessarily given.
Generally, we can assume $\re[x]_{\A}$ is $K$-full.
$\big($Otherwise, if $\re[x]_{\A}$ is not $K$-full,
let $\A^\prm := \A \cup \{0\}$, then $\re[x]_{\A^\prm}$
is always $K$-full because $1 \in \re[x]_{\A^\prm}$.
Since $a_1,\ldots,a_m \in \re[x]_{\A}$,
\reff{opt:RAK} is feasible if and only if
there exists $w \in \re^{\A^\prm}$ satisfying
\be \label{aw=b:extn}
\langle a_i, w \rangle = b_i \,( 1 \leq i \leq m),
\quad w \in \mathscr{R}_{\A^\prm}(K).
\ee
This is because: (i) if $y$ is feasible for \reff{opt:RAK},
then $y$ can be extended to a tms $w \in \mathscr{R}_{\A^\prm}(K)$,
i.e., $y = w|_{\A}$ (cf.~\cite[Prop.~3.3]{Nie-ATKMP}),
and such $w$ satisfies \reff{aw=b:extn};
(ii) if $w$ satisfies \reff{aw=b:extn}, then
the truncation $y = w|_{\A}$ satisfies \reff{opt:RAK}.$\big)$

Choose $c \in \re[x]_{\A}$ such that $c>0$ on $K$.
Consider the resulting optimization problems \reff{opt:RAK} and \reff{maxb:PAK}.
For such $c$, the dual problem \reff{maxb:PAK} has
an interior point. We can apply Algorithm~\ref{alg:momopt}
to solve \reff{opt:RAK}-\reff{maxb:PAK}.
If \reff{opt:RAK} is feasible, we can get a feasible point of \reff{opt:RAK}.

\begin{exm}
Let $\A =\N_6^3$ and $K=[-1, 1]^3$ be the unit cube,
which is defined by $h=(0)$ and $g=(1-x_1^2,1-x_2^2,1-x_3^2)$.
We want to know whether there exists a measure $\mu$
supported in $[-1,1]^3$ such that
{\small
\[
\int (x_1 x_2 + x_2x_3 + x_3x_1)\mathtt{d} \mu = 0, \,
\int (x_1^2x_2^2+x_2^2x_3^2+x_3^2x_1^2) \mathtt{d} \mu = 1, \,
\]
\[
\int (x_1^3x_2^2 + x_2^3x_3^2 + x_3^3x_1^2) \mathtt{d} \mu = 1.
\]
}\noindent
Let $a_1,a_2,a_3$ be the polynomials inside the above integrals respectively.
This problem is equivalent to whether there exists
$y \in \mathscr{R}_{\A}([-1,1]^3)$ satisfying
\[
\langle a_1, y \rangle = 0, \quad
\langle a_2, y \rangle = 1, \quad
\langle a_3, y \rangle = 1.
\]
%%%%%%%%%%%%%%%%%%%%%%%%%%%%%%
%%%%%%%%%%%%%%%%%%%%%%%%%%%%%%
\iffalse

clear all,
n = 3;  mpol x  3;
rhb = [0 1 1];
hmx = [x(1)*x(2)+x(2)*x(3)+x(3)*x(1), ...
 x(1)^2*x(2)^2+x(2)^2*x(3)^2+x(3)^2*x(1)^2, ...
 x(1)^3*x(2)^2+x(2)^3*x(3)^2+x(3)^3*x(1)^2];
Kmom = [ mom(hmx) == rhb ];
Kx = [1-x(1)^2>=0,1-x(2)^2>=0,1-x(3)^2>=0];
brx = mmon(x,0,3);
R =  brx'*brx;
ord = 3;
P = msdp(min( mom(R) ), Kmom, Kx, ord);
%[sta, objval] = msol(P);
[A,b,c,K] = msedumi(P); pars.fid = 0;
[xsol,ysol,info] = sedumi(A,b,c,K);
yM = c-A'*ysol;
yMat = mat( yM( K.f+1 : K.f+K.s(1)^2 ) ); yMat = yMat/yM(K.f+1);

Mflat = yMat(1:nchoosek(n+2,2), 1:nchoosek(n+2,2) );
spt = extractmin(Mflat, n, 2);
for k=1:length(spt),  spt{k}', end

calA =[1 1 0; 0 1 1; 1 0 1; 2 2 0; 0 2 2; 2 0 2; 3 2 0; 0 3 2; 2 0 3];
PEVAL = [];
mspt = [1 -1 0; 0 1 -1; -1 0 1; 1 1 1];
for k = 1: length(mspt)
 PEVAL = [PEVAL  eval_monvec( calA, mspt(k,:)' ) ];
end
lsA = [sum(PEVAL(1:3,:)); sum(PEVAL(4:6,:)); sum(PEVAL(7:9,:))];
rho = lsA \ rhb',
norm(lsA*rho-rhb',inf),

\fi
%%%%%%%%%%%%%%%%%%%%%%%%%%%%%%%%%%%%%%%%%%%
Choose $c = \sum_{ 0 \leq |\af| \leq 3} x^{2\af}$.
For $k=3$, $y^{*,3}$ admits the measure
$
\frac{1}{2} \dt_{(0, 1, -1)} +
\frac{1}{6} \dt_{(1,1,1)},
$
which satisfies the above.
\qed
\end{exm}

Second, we discuss how to check whether \reff{maxb:PAK} is feasible.
Suppose $c,a_1,\ldots,a_m \in \re[x]_{\A}$ are given,
while $b$ is not necessarily. Let $k=\lceil \deg(\A)/2 \rceil$.
Solve the semidefinite feasibility problem
\be  \label{ck:f-in-QIk}
c -\lmd_1 a_1  - \cdots - \lmd_m a_m \in Q_k(g) + I_{2k}(h).
\ee
If \reff{ck:f-in-QIk} is feasible,
we can get a feasible point of \reff{maxb:PAK};
if not, let $k:=k+1$ and solve \reff{ck:f-in-QIk} again.
Repeat this process.
If the affine subspace $c + \mbox{span}\{a_1,\ldots,a_m\}$ intersects
the interior of $\mathscr{P}_{\A}(K)$,
we can always find a feasible point of \reff{maxb:PAK}
by solving \reff{ck:f-in-QIk}.
This can be implied by Proposition~\ref{pr:SDr:PAK}, under the archimedeanness.
If $c + \mbox{span}\{a_1,\ldots,a_m\}$ intersects
a generic point of the boundary of $\mathscr{P}_{\A}(K)$,
we can also get a feasible point of \reff{maxb:PAK}
by solving \reff{ck:f-in-QIk} (cf.~\cite{Nie-opcd}).
In the remaining cases, it is still an open question
to find a feasible point of \reff{maxb:PAK}
by using SOS relaxations, to the best of the author's knowledge.

\begin{exm}
We want to find $\lmd_1,\lmd_2$ such that
$
c - \lmd_1 a_1 - \lmd_2 a_2 \in P_{3,6},
$
where
\[
   c =  x_1^2( x_1^4 + x_2^2x_3^2 - x_1^2(x_2^2+x_3^2) ),
 a_1 =  x_2^2( x_2^4 + x_3^2x_1^2 - x_2^2(x_3^2+x_1^2) ),
 \]
 \[
 a_2 =  x_3^2( x_3^4 + x_1^2x_2^2 - x_3^2(x_1^2+x_2^2) ) .
\]
%
%The unit sphere $\mathbb{S}^2$ is defined by $h=(x_1^2+x_2^2+x_3^2-1)$ and $g=(0)$.
%
%%%%%%%%%%%%%%%%%%%%%%%%%%%%%%
%%%%%%%%%%%%%%%%%%%%%%%%%%%%%%
\iffalse

clear all,
n = 3;  mpol x  3;
b = [0 0];
c = x(1)^2*( x(1)^4 + x(2)^2*x(3)^2 - x(1)^2*(x(2)^2+x(3)^2) );
hmx = [  x(2)^2*( x(2)^4 + x(3)^2*x(1)^2 - x(2)^2*(x(3)^2+x(1)^2) ), ...
x(3)^2*( x(3)^4 + x(1)^2*x(2)^2 - x(3)^2*(x(1)^2+x(2)^2) )];
Kmom = [ mom(hmx) == b ];
Kx = [1-x'*x==0];
ord = 4;
P = msdp(min( mom(c) ), Kmom, Kx, ord);
%[sta, objval] = msol(P);
[A,b,c,K] = msedumi(P); pars.fid = 0;
[xsol,ysol,info] = sedumi(A,b,c,K);
xsol(1:2),

\fi
%%%%%%%%%%%%%%%%%%%%%%%%%%%%%%%%%%%%%%%%%%%
For $k=4$, \reff{ck:f-in-QIk} is feasible
with $(\lmd_1, \lmd_2) = (-1,-1)$.
\qed
\end{exm}

\subsection{Infeasibility certificates}

First, we prove a certificate for the infeasibility of \reff{opt:RAK}.
Suppose $a_1,\ldots,a_m \in \re[x]_{\A}$ and $b\in \re^m$ are given,
while $c$ is not necessarily.

\begin{lem} \label{lm:infea:RAK}
Let $K$ be as in \reff{def:K}.
Then, we have:
\bit

\item [(i)] The problem \reff{opt:RAK} is infeasible
if \reff{mincy:k-MOM} is infeasible for some order $k$;
\reff{mincy:k-MOM} is infeasible
if there exist $\lmd$ and $k$ such that
\be  \label{blmd<0:lmaPAK}
b^T\lmd < 0, \quad  \lmd_1 a_1 + \cdots + \lmd_m a_m \in Q_k(g) + I_{2k}(h).
\ee

\item [(ii)]  Suppose $I(h)+Q(g)$ is archimedean and
there exists $a \in \mbox{span}\{a_1,\ldots,a_m\}$ such that $a>0$ on $K$.
If \reff{opt:RAK} is infeasible, then
\reff{blmd<0:lmaPAK} holds for some $\lmd,k$
and \reff{mincy:k-MOM} is infeasible.

\eit

\end{lem}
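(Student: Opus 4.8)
For part (i), both implications are short. The first is just that \reff{mincy:k-MOM} is a relaxation of \reff{opt:RAK}: if $y$ is feasible for \reff{opt:RAK} it extends to some $w\in\mathscr{R}_{2k}(K)\subseteq\Phi_k(g)\cap E_k(h)$ (cf.~\cite[Prop.~3.3]{Nie-ATKMP}), so $(y,w)$ is feasible for \reff{mincy:k-MOM}, and contraposition gives the claim. For the second, I would argue by contradiction: if $(y,w)$ were feasible for \reff{mincy:k-MOM} while \reff{blmd<0:lmaPAK} holds, then $p:=\lmd_1 a_1+\cdots+\lmd_m a_m\in Q_k(g)+I_{2k}(h)$ pairs nonnegatively with $w\in\Phi_k(g)\cap E_k(h)$ by \reff{<p,z>=0:qmod}, yet $\langle p,w\rangle=\sum_i\lmd_i\langle a_i,y\rangle=b^T\lmd<0$, a contradiction.

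Part (ii) is the real content; the plan is a theorem-of-the-alternative (Farkas) argument. Write $a:=\theta_1 a_1+\cdots+\theta_m a_m\in\re[x]_\A$ for the polynomial positive on $K$; then $\re[x]_\A$ is $K$-full, $K$ is compact (archimedeanness), and $a\geq\veps$ on $K$ for some $\veps>0$. By Proposition~\ref{lm:pro+dual}, $\mathscr{R}_\A(K)$ is a closed convex cone. Introduce the linear map $T\colon\re^\A\to\re^m$, $T(y)=(\langle a_1,y\rangle,\ldots,\langle a_m,y\rangle)$, so that \reff{opt:RAK} is feasible exactly when $b\in T(\mathscr{R}_\A(K))$. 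I would then show that the convex cone $C:=T(\mathscr{R}_\A(K))$ is closed, separate $b$ from it, and convert the separating functional into \reff{blmd<0:lmaPAK}.

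For closedness, the key observation is that $\langle a,y\rangle=\int a\,\mathtt{d}\nu>0$ for every nonzero $y\in\mathscr{R}_\A(K)$ with representing measure $\nu$, and that $\nu(K)\leq\veps^{-1}\langle a,y\rangle$. Hence the slice $\{y\in\mathscr{R}_\A(K):\langle a,y\rangle=1\}$ is closed and bounded (each $|y_\af|\leq(\max_K|x^\af|)\,\nu(K)$), so compact; its image $C_0$ under $T$ is compact, and $0\notin C_0$ because $T(y)=0$ forces $\langle a,y\rangle=0$. Since every point of $\mathscr{R}_\A(K)$ is either $0$ or a positive multiple of a point of that slice, $C=\{0\}\cup\bigcup_{t>0}tC_0$, and the usual compactness argument (for $t_ns_n\to z\neq0$ with $s_n\in C_0$, the scalars $t_n$ stay bounded away from $0$ and $\infty$) shows $C$ is closed.

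Finally, if \reff{opt:RAK} is infeasible then $b\notin C$; separating $b$ from the closed convex cone $C$ gives $\lmd\in\re^m$ with $b^T\lmd<0$ and $\lmd^T T(y)\geq0$ for all $y\in\mathscr{R}_\A(K)$. Testing on the moment vectors $[u]_\A$ of Dirac measures $\dt_u$, $u\in K$, yields $p(u)\geq0$ with $p:=\lmd_1a_1+\cdots+\lmd_ma_m$, i.e.\ $p\in\mathscr{P}_\A(K)$ — but only nonnegative on $K$, not strictly positive, so Putinar's theorem does not yet apply. I would fix this by perturbing: for small $\veps_0>0$ we have $p+\veps_0a>0$ on $K$ and $b^T(\lmd+\veps_0\theta)<0$, so by Theorem~\ref{thm:PutThm} $p+\veps_0 a\in I(h)+Q(g)=\bigcup_k(I_{2k}(h)+Q_k(g))$; with $\lmd':=\lmd+\veps_0\theta$ and $k$ large this is exactly \reff{blmd<0:lmaPAK}, whence \reff{mincy:k-MOM} is infeasible by part (i). The main obstacle is the closedness of $C$: linear images of closed convex cones need not be closed, and it is precisely the hypothesis that $\mathrm{span}\{a_1,\ldots,a_m\}$ contains a polynomial positive on the compact set $K$ that rescues the argument by making the normalized slice compact and bounded away from $0$ under $T$; the separation step and the $\veps_0$-perturbation that upgrades $p\geq0$ on $K$ to a strictly positive polynomial eligible for Putinar's theorem are routine.
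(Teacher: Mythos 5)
Your part (i) matches the paper's argument exactly: the relaxation observation for the first implication, and the pairing \reff{<p,z>=0:qmod} applied to $w\in\Phi_k(g)\cap E_k(h)$ for the second. For part (ii) you arrive at the same intermediate statement as the paper --- a $\lmd$ with $b^T\lmd<0$ and $\lmd_1a_1+\cdots+\lmd_ma_m\in\mathscr{P}_{\A}(K)$ --- and then finish identically (perturb by a small multiple of the positive combination $a$ so that Theorem~\ref{thm:PutThm} applies, then invoke part (i)), but you reach that statement by a genuinely different route. The paper sets up the primal--dual pair \reff{0:ay=b:RAK}--\reff{blmd:a-in-PAK}, notes that the dual is strictly feasible (its interior point supplied by $a>0$ on $K$ via Lemma~\ref{lm:int-ARz}) while the primal is infeasible, and cites the conic duality theorem \cite[Theorem~2.4.I]{BTN} to conclude that \reff{blmd:a-in-PAK} is unbounded below. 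You instead prove the required theorem of the alternative from scratch: you pass to the image cone $C=T(\mathscr{R}_{\A}(K))\subseteq\re^m$ under $T(y)=(\langle a_1,y\rangle,\ldots,\langle a_m,y\rangle)$, establish that $C$ is closed by exhibiting the compact normalized slice $\{y\in\mathscr{R}_{\A}(K):\langle a,y\rangle=1\}$ whose image avoids $0$ (this is precisely where the hypothesis $a>0$ on $K$ enters, giving both the uniform bound $\nu(K)\leq\veps^{-1}$ on representing measures and the nonvanishing of $T$ on the slice), and then separate $b$ from $C$, recovering $\sum_i\lmd_ia_i\geq 0$ on $K$ by testing against the moment vectors $[u]_{\A}$ of Dirac measures at $u\in K$. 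Your version is longer but self-contained, and it makes transparent why the positivity hypothesis cannot be dropped (linear images of closed convex cones need not be closed); the paper's version is shorter at the cost of outsourcing exactly that issue to the strict-feasibility hypothesis of the cited duality theorem. Both arguments are correct.
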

\begin{proof}
(i) The problem \reff{mincy:k-MOM} is a relaxation of \reff{opt:RAK}, i.e.,
the set of feasible $y$ in \reff{opt:RAK}
is contained in that of \reff{mincy:k-MOM}. Clearly,
if \reff{mincy:k-MOM} is infeasible, then
\reff{opt:RAK} is also infeasible.

If \reff{blmd<0:lmaPAK} holds for some $\lmd,k$,
then \reff{mincy:k-MOM} must be infeasible, because
any feasible $y$ in \reff{mincy:k-MOM} would result in the contradiction
{\small
\[
0 > b^T\lmd = \sum_{i=1}^m \lmd_i \langle a_i, y \rangle
= \langle \sum_{i=1}^m \lmd_i a_i, y \rangle  \geq 0.
\]
}

(ii) Suppose \reff{opt:RAK} is infeasible. Consider the optimization problem
\be \label{0:ay=b:RAK}
\max \quad 0 \quad s.t. \quad
\langle a_i, y \rangle = b_i \, (i =1,\ldots, m), \quad
y \in \mathscr{R}_{\A}(K).
\ee
Its dual problem is
\be  \label{blmd:a-in-PAK}
\min_{ \lmd \in \re^m } \quad b^T\lmd \quad s.t. \quad
\lmd_1 a_1 + \cdots + \lmd_m a_m  \in \mathscr{P}_{\A}(K).
\ee
By the assumption, $\mathscr{R}_{\A}(K)$ and $\mathscr{P}_{\A}(K)$
are closed convex cones (cf. Proposition~\ref{lm:pro+dual}),
and \reff{blmd:a-in-PAK} has an interior point.
So, the strong duality holds and
\reff{blmd:a-in-PAK} must be unbounded from below
(cf. \cite[Theorem~2.4.I]{BTN}), i.e.,
there exists $\hat{\lmd}$ satisfying
\[
b^T \hat{\lmd} <0,  \quad
\hat{\lmd}_1 a_1 + \cdots + \hat{\lmd}_m a_m  \in \mathscr{P}_{\A}(K).
\]
By the assumption, there exists $\bar{\lmd}$ such that
$\bar{\lmd}_1 a_1 + \cdots + \bar{\lmd}_m a_m >0$ on $K$.
For $\eps >0$ small,
$\lmd:=\hat{\lmd} + \eps \bar{\lmd}$ satisfies \reff{blmd<0:lmaPAK}
for some $k$, by Theorem~\ref{thm:PutThm}.
By item (i), we know \reff{mincy:k-MOM} is infeasible.
\qed
\end{proof}

Here is an example for the infeasibility certificate \reff{blmd<0:lmaPAK}.

\begin{exm} \label{exm:5.4}
Let $\A =\N_6^2$ and $K=\mathbb{S}^1$ be the unit circle in $\re^2$,
defined by $h=(x_1^2+x_2^2-1)$ and $g=(0)$ as in \reff{def:K}.
We want to know whether there exists a measure $\mu$
supported in $\mathbb{S}^1$ such that
\[
\int  x_1^2 x_2^2 \mathtt{d} \mu = 1, \,
\int  (x_1^4+x_2^4) \mathtt{d} \mu = 1, \,
\int  (x_1^6+x_2^6) \mathtt{d} \mu = 1.
\]
This is equivalent to checking whether there exists
$y \in \mathscr{R}_{\A}(\mathbb{S}^1)$ satisfying
\[
\langle a_1, y \rangle = 1, \quad
\langle a_2, y \rangle = 1, \quad
\langle a_3, y \rangle = 1, \quad
\]
with $a_1 = x_1^2x_2^2$, $a_2 =x_1^4+x_2^4$, $a_3 = x_1^6+x_2^6$.
Indeed, such an $\A$-tms $y$ does not exist, because
\reff{blmd<0:lmaPAK} is satisfied for $\lmd=(-3,1,1)$:
$\lmd_1 + \lmd_2 + \lmd_3 <0$ and
\[
-3a_1+a_2+a_3 = 2(x_1^2-x_2^2)^2 + (x_1^4-x_1^2x_2^2+x_2^4) h
 \in I_6(h) + Q_3(g).
\]
%%%%%%%%%%%%%%%%%%%%%%%%%%%%%%
%%%%%%%%%%%%%%%%%%%%%%%%%%%%%%
\iffalse

clear all,
n = 2;  mpol x  2;
b = [1 1 1];
hmx = [x(1)^2*x(2)^2, x(1)^4+x(2)^4, x(1)^6+x(2)^6];
Kmom = [ mom(hmx) == b ];
Kx = [1-x'*x==0];
brx = mmon(x,0,3);  R1 = randn(length(brx));
R = 1+ brx'*R1'*R1*brx;
ord = 3;
P = msdp(min( mom(R) ), Kmom, Kx, ord);
[sta, objval] = msol(P);

\fi
%%%%%%%%%%%%%%%%%%%%%%%%%%%%%%%%%%%%%%%%%%%
By Lemma~\ref{lm:infea:RAK},
the above measure $\mu$ does not exist.
\qed
\end{exm}

\bigskip

Second, we give a certificate for the infeasibility of \reff{maxb:PAK}.
Suppose $c,a_1,\ldots,a_m \in \re[x]_{\A}$ are given,
while $b$ is not necessarily.

\begin{lem}  \label{crtf:PAK:infeas}
Let $K$ be compact and $c,a_1, \ldots, a_m \in \re[x]_{\A}$ be given.
\bit
\item [(i)] Problem \reff{maxb:PAK} is infeasible if there exists $y$ satisfying
\be \label{cy<0:ay=0}
c^Ty < 0, \quad
\langle a_i, y \rangle = 0 \, (i =1,\ldots, m), \quad y \in \mathscr{R}_{\A}(K).
\ee

\item  [(ii)] Suppose there does not exist $0 \ne a \in \mbox{span}\{a_1,\ldots,a_m\}$
such that $a\geq 0$ on $K$. If \reff{maxb:PAK} is infeasible,
then there exists $y$ satisfying \reff{cy<0:ay=0}.
\eit
\end{lem}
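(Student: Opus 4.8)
The plan is a one-line contradiction via $K$-positivity. Assume some $y$ satisfies \reff{cy<0:ay=0} and, for contradiction, that \reff{maxb:PAK} is feasible, say $p := c - \sum_{i=1}^{m}\lmd_i a_i \in \mathscr{P}_{\A}(K)$. Let $\mu$ be a $K$-measure for $y$. Then $\langle p, y\rangle = \int p\,\mathtt{d}\mu \geq 0$ since $p \geq 0$ on $K \supseteq \supp{\mu}$, while $\langle p, y\rangle = \langle c,y\rangle - \sum_i \lmd_i \langle a_i, y\rangle = c^Ty < 0$, a contradiction; hence \reff{maxb:PAK} is infeasible.

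\textbf{Part (ii).} First I would discard the trivial case $K = \emptyset$ (then $\mathscr{P}_{\A}(K) = \re[x]_{\A}$, so \reff{maxb:PAK} is feasible and there is nothing to prove), and then reduce to the case that $\re[x]_{\A}$ is $K$-full by replacing $\A$ with $\A^\prm := \A \cup \{0\}$: because $c,a_1,\dots,a_m \in \re[x]_{\A}$, the feasible $\lmd$-set of \reff{maxb:PAK} and the span-hypothesis are unchanged, and any $w \in \re^{\A^\prm}$ satisfying \reff{cy<0:ay=0} restricts to $w|_{\A}$, which satisfies \reff{cy<0:ay=0} as well (the same $K$-measure works). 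So assume $\re[x]_{\A}$ is $K$-full; by Proposition~\ref{lm:pro+dual}, $\mathscr{P}_{\A}(K)$ and $\mathscr{R}_{\A}(K)$ are closed convex cones with nonempty interior, $\mathscr{R}_{\A}(K) = \mathscr{P}_{\A}(K)^{*}$, and hence $\mathscr{P}_{\A}(K) = \mathscr{R}_{\A}(K)^{*}$ by the bipolar theorem.

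The heart of the argument is to view \reff{maxb:PAK} as the conic dual of
\[
(P_0):\quad \min_{y}\ \langle c, y\rangle \quad \mbox{s.t.}\quad \langle a_i, y\rangle = 0\ (i=1,\dots,m),\quad y \in \mathscr{R}_{\A}(K)
\]
(its conic dual is ``$\max 0$ subject to $c - \sum_i \lmd_i a_i \in \mathscr{R}_{\A}(K)^{*} = \mathscr{P}_{\A}(K)$'', which has exactly the feasible set of \reff{maxb:PAK}), and to show $(P_0)$ is strictly feasible. This is where the hypothesis of (ii) is used: put $V := \{y \in \re^{\A} : \langle a_i, y\rangle = 0,\ i=1,\dots,m\}$, so $V^{\perp} = \mbox{span}\{a_1,\dots,a_m\}$. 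If $V$ did not meet $int(\mathscr{R}_{\A}(K))$, then separating the subspace $V$ from the disjoint nonempty open convex set $int(\mathscr{R}_{\A}(K))$ would produce $0 \neq u \in V^{\perp} = \mbox{span}\{a_1,\dots,a_m\}$ with $\langle u, p\rangle \geq 0$ for all $p \in \mathscr{R}_{\A}(K)$, that is $u \in \mathscr{R}_{\A}(K)^{*} = \mathscr{P}_{\A}(K)$, that is $u \geq 0$ on $K$ --- contradicting the assumption that no nonzero element of $\mbox{span}\{a_1,\dots,a_m\}$ is nonnegative on $K$. Hence $(P_0)$ has an interior point. Since \reff{maxb:PAK} is infeasible, the conic dual of $(P_0)$ is not solvable, so $(P_0)$ cannot be bounded below --- otherwise strong conic duality (\cite[Theorem~2.4.I]{BTN}, applied exactly as in the proof of Lemma~\ref{lm:infea:RAK}(ii)) would make the dual solvable. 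As $(P_0)$ is feasible (take $y=0$), it is unbounded below, so there is a feasible $y$ with $\langle c, y\rangle < 0$; this $y$ satisfies \reff{cy<0:ay=0}, and restricting it through the reduction above handles the general $\A$.

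The conceptual obstacle --- and the only place where the hypothesis of (ii) really bites --- is the separation step establishing $V \cap int(\mathscr{R}_{\A}(K)) \neq \emptyset$, which relies on the duality $\mathscr{R}_{\A}(K)^{*} = \mathscr{P}_{\A}(K)$ from Proposition~\ref{lm:pro+dual}. Beyond that, the only care needed is to check that the reduction to the $K$-full case genuinely preserves both the infeasibility of \reff{maxb:PAK} and the span-hypothesis, and that $(P_0)$ fits the conic-duality template of \cite{BTN} with $\mathscr{R}_{\A}(K)$ as the ambient cone --- this is the same template already used for Lemma~\ref{lm:infea:RAK}, so the same citation applies.
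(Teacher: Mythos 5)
Your part (i) is exactly the paper's argument: pair a feasible $c(\lmd)$ with $y$, getting $\langle c(\lmd),y\rangle\geq 0$ from the representing measure and $\langle c(\lmd),y\rangle=c^Ty<0$ from the linear constraints. For part (ii), however, you take a genuinely different and correct route. The paper first shows that the hypothesis forces $\mathscr{P}_{\A}(K)\cap\big(c+\mbox{span}\{a_1,\ldots,a_m\}\big)$ to be bounded (a normalization argument, after reducing to $a_i$ linearly independent modulo the vanishing ideal of $K$), and then strictly separates the compact truncated cone $\{p\in\mathscr{P}_{\A}(K):\|p\|_2\leq T\}$ from the closed affine set $c+\mbox{span}\{a_1,\ldots,a_m\}$; the separating functional is the desired $y$. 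You instead view \reff{maxb:PAK} as the conic dual of the moment-side problem $(P_0)$ over the cone $\mathscr{R}_{\A}(K)$, use the hypothesis of (ii) exactly once --- to separate the subspace $\{y:\langle a_i,y\rangle=0\}$ from $int(\mathscr{R}_{\A}(K))$ and conclude that $(P_0)$ is strictly feasible --- and then invoke the same conic duality theorem \cite[Theorem~2.4.I]{BTN} that the paper already uses in Lemma~\ref{lm:infea:RAK}(ii) and Theorem~\ref{thm:alg:asymp}(i): with the dual infeasible and the primal strictly feasible, the primal must be unbounded below, which produces the certificate $y$. Your version buys three things: it is the mirror image of the proof of Lemma~\ref{lm:infea:RAK}(ii) with the roles of $\mathscr{R}_{\A}(K)$ and $\mathscr{P}_{\A}(K)$ swapped; it dispenses with the boundedness step and the ``linearly independent in $\re[x]/I(K)$'' normalization, since the separating functional $u$ you extract is automatically a nonzero polynomial in $\mbox{span}\{a_1,\ldots,a_m\}$ that is nonnegative on $K$; and it makes explicit the reduction to the $K$-full case via $\A\cup\{0\}$, a point the paper's proof also relies on (it reads off $y\in\mathscr{R}_{\A}(K)$ from $K$-positivity, which uses Proposition~\ref{lm:pro+dual}) without stating it. The price is that you lean on the bipolar identity $\mathscr{R}_{\A}(K)^{*}=\mathscr{P}_{\A}(K)$ and on $\mathscr{R}_{\A}(K)$ being a closed, solid (and, after adjoining $0$ to $\A$, pointed) cone so that the conic duality theorem applies; all of this follows from Proposition~\ref{lm:pro+dual} after your reduction, so the argument stands.
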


\begin{remark}
Clearly, if there exists $a \in \mbox{span}\{a_1,\ldots,a_m\}$
such that $a > 0$ on $K$, then \reff{maxb:PAK} must be feasible.
Therefore, for \reff{maxb:PAK} to be infeasible,
none of polynomials in $\mbox{span}\{a_1,\ldots,a_m\}$
can be positive on $K$.
So, the assumption in Lemma~\ref{crtf:PAK:infeas} (ii)
is almost necessary for \reff{maxb:PAK} to be infeasible.
Indeed, it cannot be removed. For instance,
consider $K = \mathbb{S}^1$ and $\A = \{ |\af| = 2\}$.
Choose $c,a_1$ such that
$
c(\lmd) = x_1x_2 - \lmd_1 x_1^2.
$
Clearly, $c(\lmd) \not\in \mathscr{P}_{\A}( \mathbb{S}^1 )$ for all $\lmd$.
For all $y \in \mathscr{R}_{\A}( \mathbb{S}^1 )$, if $\langle a_1, y \rangle =0$,
then $\langle c, y \rangle =0$. This is because
{\small
\[
\left| \int x_1 x_2 \mathtt{d} \mu  \right| \leq
\left( \int x_1^2 \mathtt{d} \mu \right)^{1/2}
\left( \int x_2^2 \mathtt{d} \mu \right)^{1/2}
\]
\noindent}for
all nonnegative measure $\mu$, by the Cauchy-Schwarz inequality.
So, there is no $y$ satisfying \reff{cy<0:ay=0},
while \reff{maxb:PAK} is infeasible.
\end{remark}

\begin{proof}[of Lemma~\ref{crtf:PAK:infeas}]\,
(i) Suppose \reff{cy<0:ay=0} holds.
If \reff{maxb:PAK} has a feasible $\lmd$, then we get
\[
0 \leq \langle c(\lmd), y \rangle = \langle c, y \rangle
- \lmd_1 \langle a_1, y \rangle - \cdots - \lmd_m \langle a_m, y \rangle
= \langle c, y \rangle < 0,
\]
a contradiction. So \reff{maxb:PAK} must be infeasible
if \reff{cy<0:ay=0} is satisfied.

(ii) Without loss of generality, we can assume $a_1,\ldots,a_m$
are linearly independent in the quotient space $\re[x]/I(K)$
(i.e., the space of polynomial functions defined on $K$ \cite{CLO}, where
$I(K)$ is the ideal of all polynomials $p$ such that $p \equiv 0$ on $K$).
We show that there exists $T>0$ such that
\be \label{c+a:in:B(0,T)}
\mathscr{P}_{\A}(K) \cap (c+\mbox{span}\{a_1,\ldots,a_m\}) \subseteq B(0,T).
\ee
(The left above intersection might be empty.)
Suppose otherwise such $T$ does not exist, then there exists a sequence $\{\lmd^k\}$
such that $\|\lmd^k\|_2 \to \infty$ and
$c(\lmd^k) \in \mathscr{P}_{\A}(K)$ for all $k$.
The sequence $\{ \lmd^k/\|\lmd^k\|_2 \}$ is bounded.
We can generally assume $\lmd^k/\|\lmd^k\|_2 \to \lmd^* \ne 0$.
Clearly, $c(\lmd^k)/\|\lmd^k\|_2 \in \mathscr{P}_{\A}(K)$ for all $k$. So,
\[
c(\lmd^k)/\|\lmd^k\|_2 \to a^*:=
-( \lmd_1^*a_1 + \cdots + \lmd_m^*a_m )  \in \mathscr{P}_{\A}(K).
\]
Since $a_1,\ldots,a_m$ are linearly independent in $\re[x]/I(K)$ and $\lmd^*\ne 0$,
we know $a^*|_K \not\equiv 0$ and $a^*|_K \geq 0$.
This contradicts the given assumption.
So \reff{c+a:in:B(0,T)} must be satisfied for some $T>0$.
Let
\[
\mathscr{C}_1 = \{ p \in  \mathscr{P}_{\A}(K) \mid \|p\|_2 \leq T\},  \quad
\mathscr{C}_2 = c+ span\{a_1, \ldots, a_m\}.
\]
By \reff{c+a:in:B(0,T)}, \reff{maxb:PAK} is infeasible
if and only if $\mathscr{C}_1 \cap \mathscr{C}_2 = \emptyset$.
Because $K$ is compact, the set $\mathscr{C}_1$ is compact convex, and
$\mathscr{C}_2$ is closed convex.
By the strict convex set separation theorem, they do not intersect if and only if
there exists $y \in \re^{\A}$ and $\tau \in \re$ such that
\[
\langle p, y \rangle > \tau\quad \forall \, p \in \mathscr{C}_1,
\]
\[
\langle p, y \rangle < \tau \quad \forall \, p \in \mathscr{C}_2.
\]
The first above inequality implies $\tau < 0$
and $y \in \mathscr{R}_{\A}(K)$, and the second one
implies $c^Ty < 0$ and $\langle a_i, y \rangle = 0$ for all $i$.
Thus, this $y$ satisfies \reff{cy<0:ay=0}.
\qed
\end{proof}

The certificate \reff{cy<0:ay=0} can be checked by solving
the feasibility problem:
\be \label{cy=-1:ay=0}
c^Ty = -1, \quad
\langle a_i, y \rangle = 0 \, (i =1,\ldots, m), \quad y \in \mathscr{R}_{\A}(K).
\ee

\begin{exm}
Let $K=\mathbb{S}^2$ and $\A = \{\af \in \N^n: |\af| = 6\}$.
Then $\mathscr{P}_{\A}(K)$ equals $P_{3,6}$,
the cone of nonnegative ternary sextic forms.
We want to know whether there exist
$\lmd_1, \lmd_2, \lmd_3$ such that
{\small
\[
\underbrace{x_1^2x_2^2(x_1^2+x_2^2-4x_3^2) +x_3^6}_{c}
- \lmd_1 \underbrace{x_1^3x_2^3}_{a_1}
- \lmd_2 \underbrace{x_1^3x_3^3}_{a_2}
- \lmd_3 \underbrace{x_2^3x_3^3}_{a_3}
\in P_{3,6}.
\]
}\noindent
Indeed, there are no $\lmd_1, \lmd_2, \lmd_3$ satisfying the above.
To get a certificate for this, solve the feasibility problem \reff{cy=-1:ay=0}.
It has a feasible tms $y$ that admits the finitely atomic measure
$
\frac{27}{4} \big( \dt_{(1, 1, 1)/\sqrt{3}} + \dt_{(-1, 1, 1)/\sqrt{3}}
+ \dt_{(1, -1, 1)/\sqrt{3}}  + \dt_{(1, 1, -1)/\sqrt{3}} \big).
$
%%%%%%%%%%%%%%%%%%%%%%%%%%%%%%
%%%%%%%%%%%%%%%%%%%%%%%%%%%%%%
\iffalse

clear all,
n = 3;  mpol x  3;
b = [-1 0 0 0];
hmx = [x(1)^2*x(2)^2*( x(1)^2+x(2)^2-4*x(3)^2 )+x(3)^6, ...
x(1)^3*x(2)^3,  x(1)^3*x(3)^3, x(2)^3*x(3)^3];
Kmom = [ mom(hmx) == b ];
Kx = [1-x'*x==0, sum(x)>=0];
brx = [x(1) x(2) x(3)];
R = brx*brx';
P = msdp(min( mom(R) ), Kmom, Kx, 4);
[sta, objval] = msol(P);

\fi
%%%%%%%%%%%%%%%%%%%%%%%%%%%%%%%%%%%%%%%%%%%
\qed
\end{exm}

%
%Clearly, \reff{cy<0:ay=0} gives a certificate for the unboundedness
%of \reff{opt:RAK} and \reff{blmd<0:lmaPAK}
%gives a certificate for the unboundedness
%of \reff{maxb:PAK}.
%

\bigskip
%\appendix
\leftline{\bf Appendix: Checking $K$-fullness}
\bigskip

\noindent
Recall that $\re[x]_{\A}$ is $K$-full if there exists
$p \in \re[x]_{\A}$ that is positive on $K$.
We can check whether $\re[x]_{\A}$ is $K$-full or not as follows.
Clearly, $\re[x]_{\A}$ is $K$-full if and only if
there exists $\lmd \in \re^{\A}$ such that
{\small
\be \label{chec:K-full}
\sum_{\af \in \A} \lmd_\af x^\af - 1 \in \mathscr{P}_{\A^\prm}(K),
\ee
} \noindent
where $\A^\prm = \A \cup \{ 0 \}$.
Since $ 1\in \mathscr{P}_{\A^\prm}(K)$, $\re[x]_{\A^\prm}$ is always $K$-full.
Thus, checking $K$-fullness is reduced to solving a feasibility/infeasiblity issue.
This will be discussed in Section~\ref{sec:feas}.
Suppose $K$ is a compact semialgebraic set as in \reff{def:K}.
If $\re[x]_{\A}$ is $K$-full, we can get a
$\lmd$ satisfying \reff{chec:K-full} (cf.~Section~5.1).
If $\re[x]_{\A}$ is not $K$-full,
we can get a certificate for nonexistence of such $\lmd$,
under a general assumption (cf.~Lemma~\ref{crtf:PAK:infeas}).

\end{document}